\numberwithin{equation}{section}
\theoremstyle{plain}
\newtheorem{theorem}{Theorem}[section]
\newtheorem{lemma}[theorem]{Lemma}
\newtheorem{proposition}[theorem]{Proposition}
\theoremstyle{definition}
\theoremstyle{remark}
\newtheorem{remark}[theorem]{Remark}
\renewcommand{\Re}{\operatorname{Re}}
\renewcommand{\Im}{\operatorname{Im}}
\newcommand{\sym}{\operatorname{sym}}
\newcommand{\GL}{\operatorname{GL}}
\renewcommand{\mod}{\operatorname{mod}\ }
\newcommand{\dd}{\mathrm{d}}
\newcommand{\holo}{\text{holo}}
\def\@tocline#1#2#3#4#5#6#7{\relax
  \ifnum #1>\c@tocdepth % then omit
  \else
    \par \addpenalty\@secpenalty\addvspace{#2}%
    \begingroup \hyphenpenalty\@M
    \@ifempty{#4}{%
      \@tempdima\csname r@tocindent\number#1\endcsname\relax
    }{%
      \@tempdima#4\relax
    }%
    \parindent\z@ \leftskip#3\relax \advance\leftskip\@tempdima\relax
    \rightskip\@pnumwidth plus4em \parfillskip-\@pnumwidth
    #5\leavevmode\hskip-\@tempdima
      \ifcase #1
       \or\or \hskip 1em \or \hskip 2em \else \hskip 3em \fi%
      #6\nobreak\relax
    \hfill\hbox to\@pnumwidth{\@tocpagenum{#7}}\par% <---- \dotfill -> \hfill
    \nobreak
    \endgroup
  \fi}
\begin{document}

\title[Mixed moments of $L$-functions]
{Mixed moments of $\rm GL(2)$ and symmetric square $L$-functions}
\author{Bingrong Huang}
\author{Liangxun Li}
\address{Data Science Institute and School of Mathematics \\ Shandong University \\ Jinan \\ Shandong 250100 \\China}
\email{brhuang@sdu.edu.cn}
\email{lxli@mail.sdu.edu.cn}

%\dedicatory{Dedicated to Prof. Jianya Liu on the occasion of his 60th birthday.}

\date{\today}

\begin{abstract}
  In this paper, we prove asymptotic formulas of mixed moments of  $\rm GL(2)$ and its symmetric square $L$-functions for both Hecke--Maass cusp forms and holomorphic Hecke eigenforms in short intervals.
  As an application, we prove quantitative simultaneous non-vanishing of central values of these $L$-functions.

\end{abstract}
\keywords{Mixed moment, non-vanishing, Hecke--Maass cusp forms, $L$-functions}
%, weak subconvexity,  variance

\subjclass[2010]{11F66, 11F67}

\thanks{This work was supported by  the National Key R\&D Program of China (No. 2021YFA1000700) and NSFC (No. 12031008).}
% of Shandong Province (No. tsqn201909046).}
%{This work was supported by the Young Taishan Scholars Program of Shandong Province (Grant No. tsqn201909046), Qilu Young Scholar Program of Shandong University, and NSFC (Nos. 12001314 and 12031008).}

\maketitle
%\setcounter{tocdepth}{1}
%\tableofcontents

%%%%%%%%%%%%%%%%%%%%%%%%%%%%%%%%%%%%%%%%%%%%%%%%%%%%%%%%%%%%%%%%
%%%%%                        Section                       %%%%%
%%%%%%%%%%%%%%%%%%%%%%%%%%%%%%%%%%%%%%%%%%%%%%%%%%%%%%%%%%%%%%%%
\section{Introduction} \label{sec:Intr}

Estimating moments of $L$-functions is one of the main problems in number theory and has many applications, such as the non-vanishing problem   and the subconvexity problem of $L$-functions. See e.g. \cite{Duke1995,IS00Laudau-Siegel,CI,DFI2002,young2014weyl}.
Proving asymptotic formulas of moments of $L$-functions is particularly interesting as the main terms were conjectured by using the random matrix theory \cite{KS2000}, the theory of
multiple Dirichlet series \cite{DGH2003} or the recipe in \cite{CFKRS}.

Denote an orthonormal basis of even Hecke--Maass cusp forms for $SL_2(\mathbb{Z})$ by $\{u_j\}_{j\geq1}$.
Let $\Delta_{\mathbb{H}}=-y^{-2} (\partial^2/\partial x^2+\partial^2/\partial y^2)$ be the hyperbolic Laplace operator.
We have $\Delta_{\mathbb{H}} u_j = (1/4+t_j^2) u_j$, where $t_j>1$ is the spectral parameter of $u_j$. Let $\lambda_j(n)$ be $n$-th  Hecke eigenvalue of $u_j$.
The $L$-function of $u_j$ is defined by
\[
  L(s,u_j) = \sum_{n\geq1} \frac{\lambda_j(n)}{n^s}, \quad \Re(s)>1.
\]
The symmetric square $L$-function is defined by
\[
  L(s,\sym^2 u_j) = \zeta(2s) \sum_{n\geq1} \frac{\lambda_j(n^2)}{n^s}, \quad \Re(s)>1.
\]
Let $\Gamma_\mathbb{R}(s) = \pi^{-s/2} \Gamma(s/2)$,
$\gamma(s,u_j) =  \Gamma_\mathbb{R}(s+it_j) \Gamma_\mathbb{R}(s-it_j)$,
and $\gamma(s,\sym^2u_j) = \Gamma_\mathbb{R}(s) \Gamma_\mathbb{R}(s+2it_j) \Gamma_\mathbb{R}(s-2it_j)$.
Then $L(s,u_j)$ and $L(s,\sym^2 u_j)$ have analytic continuations to $\mathbb{C}$ and satisfy the functional equations
\[
  \Lambda(s,u_j) := \gamma(s,u_j) L(s,u_j) = \Lambda(1-s,u_j),
\]
and
\[
  \Lambda(s,\sym^2u_j) := \gamma(s,\sym^2u_j) L(s,\sym^2u_j) = \Lambda(1-s,\sym^2u_j).
\]
\subsection{The mixed moment}
In this paper, we are interested in the central values $L(1/2,u_j)$ and $L(1/2,\sym^2 u_j)$ when $u_j$ varies in the family of even Hecke--Maass cusp forms. Note that for odd forms $u$, we have $L(1/2,u)=0$.
It is natural to have harmonic weights when one considers the averages of central values. We introduce the harmonic weights
\[
  \omega_j = \frac{2\pi}{L(1,\sym^2 u_j)}.
\]
%\[
%  \omega_j = \frac{2\pi}{L(1,\sym^2 u_j)}
%  %= \frac{4\pi}{\cosh(\pi t_j)}|\rho_j(1)|^2
%  \quad  \textrm{and} \quad
%  \omega(t)  = \frac{4\pi}{|\zeta(1+2it)|^2}.
%  %= \frac{4\pi}{\cosh(\pi t)}|\varphi(1,1/2+it)|^2.
%\]
%Here $\zeta(s)$ is the Riemann zeta function.
By Iwaniec \cite{iwaniec1990small} and Goldfeld--Hoffstein--Lieman \cite{Hoffstein-Lockhart}, we have
\begin{equation}\label{eqn: omega^*_j}
  (\log t_j)^{-1} \ll \omega_j  \ll  {t_j}^{\varepsilon}.
\end{equation}
For positive numbers $T, \Delta\geq 1$, let
\[
   h_{T,\Delta}(t)= e^{-\frac{(t-T)^2}{\Delta^2}} + e^{-\frac{(t+T)^2}{\Delta^2}}.
\]
Our main result in this paper is the following asymptotic formula.

\begin{theorem}\label{thm:moment}
  Let $T^\varepsilon \leq \Delta \leq T^{1-\varepsilon}$.
  Let $\{u_j\}_{j\geq1}$ an orthonormal basis of even Hecke--Maass cusp forms for $\rm SL_2(\mathbb{Z})$. Denote $t_j>1$ be the spectral parameter of $u_j$.
  Then we have
  \begin{equation}\label{eqn: mixedmoment}
     \sideset{}{'}\sum_{j\geq1} w_j h_{T,\Delta}(t_j) L(1/2,u_j) L(1/2,\sym^2 u_j) =a_1 H_{T,\Delta}^{\log}+a_2H_{T,\Delta}+O(T^{1+\varepsilon}+\Delta T^{\frac{3}{4}+\varepsilon}),
  \end{equation}
  where $\sum'$ restricts to the even Hecke--Maass cusp forms,
  \[
        H_{T,\Delta}^{\log}=\int_{0}^{\infty}e^{-\frac{(t-T)^2}{\Delta^2}}t\log t\,\dd t,
  \]
  \[
        H_{T,\Delta}=\int_{0}^{\infty}e^{-\frac{(t-T)^2}{\Delta^2}}t\,\dd t,
  \]
  $
        a_1=\frac{2}{\pi}\zeta(\frac{3}{2})
  $
  and
  $
        a_2= \frac{4}{\pi}\left(\zeta(\frac{3}{2})\left(\frac{3\gamma}{4}-\frac{3}{4}\log \pi -\frac{3\log 2}{4}-\frac{\pi}{8}\right)+\frac{1}{2}\zeta'(\frac{3}{2})\right).
  $
\end{theorem}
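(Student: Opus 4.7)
The approach is the standard moment-of-$L$-function machine — approximate functional equation plus Kuznetsov trace formula — adapted to the short spectral window $h_{T,\Delta}$ and the symmetric-square factor. The main work will be in the off-diagonal, where the exponent $3/4$ in the error must be earned.

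\emph{Setup.} Apply the approximate functional equation to each of $L(1/2, u_j)$ and $L(1/2, \sym^2 u_j)$. Writing $L(s, \sym^2 u_j) = \zeta(2s)\sum_{a}\lambda_j(a^2)a^{-s}$, the second AFE becomes a double sum in $(a, b)$ with essential support $ab^2 \lesssim t_j^2$, while the first is a sum in $m$ with support $m \lesssim t_j$. Combined with the Hecke relation
\[
   \lambda_j(m)\lambda_j(a^2) = \sum_{d\mid(m, a^2)}\lambda_j(ma^2/d^2),
\]
the left-hand side of \eqref{eqn: mixedmoment} becomes a weighted sum over $(m, a, b, d)$ of spectral averages $\sideset{}{'}\sum_j w_j h_{T,\Delta}(t_j)\lambda_j(ma^2/d^2)$. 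I would encode the AFE cutoffs as Mellin--Barnes contour integrals so the $t_j$-dependence factors cleanly from the arithmetic variables. Since $L(1/2, u_j) = 0$ on odd Maass forms, the unprojected Kuznetsov formula may be applied; it decomposes each inner sum into a delta term at $ma^2/d^2 = 1$, Kloosterman sums against the spectral transform $\widehat h_{T,\Delta}^{+}$, and a continuous-spectrum piece.

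\emph{Main term.} The delta condition together with $d\mid m$ and $d\mid a^2$ forces $m = d = a^2$; collecting this with the outer $b$-sum gives a diagonal of shape
\[
   C\int h_{T,\Delta}(t)\,t\tanh(\pi t)\,\dd t \cdot \sum_{a, b \ge 1}\frac{V_1(a^2/t)\,V_2(ab^2/t^2)}{a^{3/2}\,b}.
\]
Under Mellin inversion of $V_1 V_2$ the $b$-sum produces $\zeta(1+2s)$, whose simple pole at $s=0$ yields, after a contour shift, the $\log t$ contribution with residue $\zeta(3/2)=\sum_a a^{-3/2}$ from the $a$-sum. This recovers $a_1 H_{T,\Delta}^{\log}$ with $a_1 = (2/\pi)\zeta(3/2)$ once the Kuznetsov normalization of $\omega_j = 2\pi/L(1,\sym^2 u_j)$ is accounted for; the constant $a_2$ comes from the next Laurent coefficient, encoding $\zeta'(3/2)$ (from differentiating the $a$-Mellin variable) and the constants $\gamma, \log\pi, \log 2, \pi/8$ coming from the standard expansions of $\zeta$ near $1$ and of the gamma ratios in the Kuznetsov transform. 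The Eisenstein contribution from Kuznetsov must be subtracted off; its main term has the analogous structure and is absorbed into the same expansion.

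\emph{Off-diagonal — the main obstacle.} The crux is to bound
\[
   \sum_{m, a, b, d}\frac{1}{b\sqrt{ma}}\sum_{c\ge 1}\frac{S(1, ma^2/d^2; c)}{c}\,\widehat h_{T,\Delta}^{+}\!\left(\frac{4\pi\sqrt{ma^2/d^2}}{c}\right) \ll T^{1+\varepsilon} + \Delta\,T^{3/4+\varepsilon}.
\]
The transform $\widehat h_{T,\Delta}^{+}$ decays rapidly once its argument exceeds $T^{1+\varepsilon}$, has a stationary phase near $T$, and is of size roughly $\Delta T^{-1/2}$ in the transition regime — exactly where the short-window localization $\Delta \le T^{1-\varepsilon}$ supplies the saving. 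A dyadic decomposition in $(c, m, a, b)$ together with Weil's bound $|S(1,N;c)|\ll c^{1/2+\varepsilon}(N,c)^{1/2}$ disposes of the generic ranges; the delicate regime is $\sqrt{ma^2}/c \sim T$ with $ma^2$ close to the maximum $T^{3}$, and here I would expect to need either a reverse application of Kuznetsov on the $c$-sum (recasting the Kloosterman sum spectrally to mine further cancellation) or the Deshouillers--Iwaniec spectral large sieve. The exponent $3/4$ reflects Weil's square-root cancellation combined with an additional $T^{1/4}$ gain from the $c$-average, and producing it sharply while balancing the various dyadic ranges is the real work of the proof.
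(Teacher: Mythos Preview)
Your setup is broadly correct (AFE plus Kuznetsov), but there are two substantive gaps.

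First, you have misidentified the source of the $\Delta T^{3/4+\varepsilon}$ error. It does not come from the off-diagonal at all: it arises in the \emph{diagonal} computation, when one shifts the Mellin contours to $\Re s=-1/4$ and picks up the $O(T^{-1/4+\varepsilon})$ remainder in the expansion of the gamma ratios, which after integrating against $h_{T,\Delta}(t)\,t\,\dd t$ gives $O(\Delta T^{3/4+\varepsilon})$. The off-diagonal contribution must be bounded by $O(T^{1+\varepsilon})$, uniformly in $\Delta$; that is the genuinely hard target, and it is considerably stronger than what your plan aims for.

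Second, and more seriously, your off-diagonal strategy will not reach $O(T^{1+\varepsilon})$. By passing through the Hecke relation you arrive at Kloosterman sums $S(1,N;c)$ with $N=ma^{2}/d^{2}$ of size up to $T^{3}$; Weil's bound alone yields only about $T^{7/4}$, and neither a reverse Kuznetsov step (which just reassembles the $L$-functions you started from) nor the Deshouillers--Iwaniec large sieve (which needs a bilinear structure in the moduli that you have collapsed) closes the gap in any visible way. The paper instead keeps $\lambda_j(m)\lambda_j(n^{2})$ intact and applies Kuznetsov directly, producing $S(n^{2},\pm m;c)$ with two live variables. It then follows the Conrey--Iwaniec/Young template: Poisson summation in \emph{both} $m$ and $n$ modulo $c$, a stationary-phase analysis of the resulting oscillatory integral (which localizes the dual variables $x_{1},x_{2}$ and extracts an explicit phase $e(\mp x_{1}x_{2}^{2}/(4c))$), an evaluation of the arithmetic character sum as a quadratic Gauss sum supported on $(x_{1},c)=1$, and finally a two-variable large sieve inequality for Dirichlet polynomials $\sum_{m,n}a_{m,n}(m/n)^{iu}$ with $a_{m,n}$ supported on coprime pairs. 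None of these ingredients appears in your plan, and the bilinear Poisson step is the decisive one --- it is exactly what converts the off-diagonal into a shape the large sieve can finish.
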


Here  the main term of the mixed moment is $H_{T,\Delta}^{\log}$. By direct calculations, we have
  \begin{equation*}
        H_{T,\Delta}^{\log}=\sqrt{\pi}\Delta T\log T+O(\Delta^{2+\varepsilon}),
  \end{equation*}
  and
   \begin{equation*}
        H_{T,\Delta}=\sqrt{\pi}\Delta T +O(T^{-N}),
  \end{equation*}
for any positive integer $N$.
Therefore the mixed moment \eqref{eqn: mixedmoment} is equal to
\[
    a_1\sqrt{\pi} T\Delta\log T+a_2 \sqrt{\pi}T\Delta+O(T^{1+\varepsilon}+\Delta T^{\frac{3}{4}+\varepsilon}+\Delta^{2+\varepsilon}).
\]
\par

% The mixed moments of $\rm GL(2)$ and symmetric square $L$-functions have been studied a lot.
%Let $f$ be a $\rm GL(2)$ automorphic form.

Moments of $\GL(2)$ $L$-functions in the spectral aspect have been studied extensively. For example, Ivi\'c  \cite{Ivic2002} and Qi \cite{Q22Cubicmoment} proved the asymptotic formulas for the cubic moments. It is natural to estimate moments of the symmetric square $L$-functions. See e.g. Balkanova--Frolenkov \cite{BF2021} for the first and second moments.
It is interesting to study the joint value distribution of the $\GL(2)$ $L$-functions and their symmetric square $L$-functions.
In Theorem \ref{thm:moment}, we consider the first moment of these $L$-functions.
To the best of your knowledge, there is no such a result in the spectral aspect.
However, Balkanova,  Bhowmik, Frolenkov, and Raulf did a series of work about the mixed moments for  holomorphic Hecke eigenforms in the weight aspect, see \cite{BBFR2019,BBFR2020}. (We will discuss this in more details in \S \ref{subsec:weight}). In the level aspect,
Munshi and Sengupta  \cite{MS2018simultaneousnon-vanishing} proved an asymptotic formula for the mixed moment of $L(1/2, f)L(1/2, \sym^2f)$.\par

In the proof of Theorem \ref{thm:moment}, after using the Kuznetsov trace formula and the Poisson summation formula, a natural way to give bounds for the off-diagonal term is applying an integrated large sieve inequality. In our paper,
an extended large sieve inequality is helpful. Specifically,
Let $a_{m,n}$ be supported on primitive points, i.e. $a_{m,n}=0$ for $(m,n)\neq 1$. We have
\begin{equation}\label{eqn: LSI}
        \int_{|u|\leq U}\left|\sum_{n\leq N}\sum_{m\leq M}a_{m,n}\left(\frac{m}{n}\right)^{iu}\right|^2\dd u\ll (MN)^\varepsilon(U+MN)\sum_{m\leq M}\sum_{n\leq N}|a_{m,n}|^2.
\end{equation}
It is roughly the form of the classical large sieve inequality with replacing the single summation by the double summations.
Moreover, this inequality can help us to handle the case when the inner double sums  can not be separated.
According to \cite[Chapter 7.3]{IwaniecKowalski2004analytic},
the bound  in \eqref{eqn: LSI} is essentially best possible.

\subsection{The non-vanishing problem}
The non-vanishing problems for central values for $\rm GL(2)$ and symmetric square $L$-functions have studied a lot in the past decades.
For the $\rm  GL(2)$ family, a large number of non-vanishing results have been established in the spectral aspect, see e.g. \cite{Liu18Nonvanishing,BHS2021Nonvanishing};  in the weight aspect, see e.g. \cite{Fomenko00, LT05, Luo2015}; in the level aspect, see e.g. \cite{Duke1995, IS00Laudau-Siegel}. In order to yield an effective non-vanishing results, the combination of the moment method and modification method plays a significant role in this area.
Such non-vanishing results have various applications.
A remarkable work due to Iwaniec and Sarnak \cite{IS00Laudau-Siegel}, they showed that for large $N$, at least $50\%$ of $L(1/2,f)$ with even holomorphic form $f$ for  $\Gamma_0(N)$ are positive,
and any improvement of $50\%$ will imply the nonexistence of Landau--Siegel zeros.
For the symmetric square family, Khan \cite{Khan2012Nonvanishing} established a positive proportional non-vanishing result for $L(1/2, \sym^2 f)$ with holomorphic form $f$ in the range of weight $k\asymp K$. The result for short intervals is obtained by Balkonova and Frolenkov \cite{BF2021}.
% Huang, Liu and Xu \cite{HLX2018} have established the positive proportional result for general $\rm  GL(3)$ case.

It is a natural question to ask whether there are infinitely many holomorphic Hecke eigenforms or Hecke--Maass cusp forms $\phi$ such that the central $L$-values of $\sym^r \phi$, $r=1,2,\ldots$, are simultaneous non-vanishing.
In this paper, we focus on the simplest case on simultaneous non-vanishing of central $L$-values of $\phi$ and $\sym^2 \phi$. From the result in \cite{BBFR2019}, we can know that there are infinitely many Hecke--Maass cusp forms $\phi$  such that   $L(1/2,\phi)$ and  $L(1/2,\sym^2 \phi)$ are simultaneous non-vanishing.
It is harder  to prove simultaneous non-vanishing result locally.
We can prove that, for any sufficiently large $T$ and any $\varepsilon>0$,  there are more than $T^{2/15}$ Hecke--Maass cusp forms $\phi$ with $|t_\phi-T|\leq T^{\varepsilon}$ such that   $L(1/2,\phi)$ and  $L(1/2,\sym^2 \phi)$ are simultaneous non-vanishing.
More precisely, we have the following quantitative result.

\begin{theorem}\label{thm:non-vanishing}
  Let $T^\varepsilon \leq \Delta \leq T^{1-\varepsilon}$. Then we have
  \[
     \sideset{}{'}\sum_{\substack{T-\Delta \leq  t_j \leq T+\Delta\\ L(1/2,u_j) L(1/2,\sym^2 u_j)\neq 0 }} 1 \gg T^{2/15-\varepsilon} \Delta^2,
  \]
  if $T^\varepsilon \leq \Delta \leq T^{1/5+\varepsilon}$; and
  \[
     \sideset{}{'}\sum_{\substack{T-\Delta\leq  t_j \leq T+\Delta \\ L(1/2,u_j) L(1/2,\sym^2 u_j)\neq 0 }} 1 \gg T^{1/3-\varepsilon}\Delta,
  \]
  if $T^{1/5+\varepsilon} \leq \Delta \leq T^{1-\varepsilon}$.
\end{theorem}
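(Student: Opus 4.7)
Proof plan. The approach is a moment/Cauchy--Schwarz argument using Theorem~\ref{thm:moment} as the numerator, together with a pointwise subconvex bound for $L(1/2, u_j)$ and a short-interval second moment for $L(1/2, \sym^2 u_j)$ as the denominator. The break point $\Delta = T^{1/5}$ will emerge naturally from the size of the error term in that second moment.

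Restricting the sum in Theorem~\ref{thm:moment} to the indices with $L(1/2,u_j) L(1/2, \sym^2 u_j)\neq 0$ and applying Cauchy--Schwarz gives
\[
(T\Delta\log T)^2 \ll \Big|\sideset{}{'}\sum_{j\geq 1} w_j h_{T,\Delta}(t_j) L(1/2,u_j) L(1/2,\sym^2 u_j)\Big|^2 \leq \mathcal{N}^\sharp \cdot \Sigma_2,
\]
where
\[
\mathcal{N}^\sharp = \sideset{}{'}\sum_{\substack{j\geq 1 \\ L(1/2,u_j) L(1/2, \sym^2 u_j)\neq 0}} w_j h_{T,\Delta}(t_j),\qquad \Sigma_2 = \sideset{}{'}\sum_{j\geq 1} w_j h_{T,\Delta}(t_j) |L(1/2,u_j)|^2 |L(1/2,\sym^2 u_j)|^2.
\]
Using $w_j \ll t_j^\varepsilon$ and the Gaussian decay of $h_{T,\Delta}$ (whose effective support has width $O(\Delta\sqrt{\log T})$, which can be trimmed down to $[T-\Delta,T+\Delta]$ by shrinking $\Delta$ by a logarithmic factor absorbed into $\varepsilon$), we obtain $\mathcal{N}^\sharp \ll T^\varepsilon \mathcal{N}$, where $\mathcal{N}$ is the count in the theorem. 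Thus $\mathcal{N} \gg T^{2-\varepsilon}\Delta^2 / \Sigma_2$, and the problem reduces to an upper bound on $\Sigma_2$.

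To bound $\Sigma_2$, I would apply the Weyl-type pointwise subconvex bound $L(1/2, u_j) \ll t_j^{1/3+\varepsilon}$ (Good, Iwaniec--Meurman) to pull out the $\GL(2)$ factor:
\[
\Sigma_2 \ll T^{2/3+\varepsilon} \cdot \sideset{}{'}\sum_{j\geq 1} w_j h_{T,\Delta}(t_j) |L(1/2, \sym^2 u_j)|^2.
\]
A short-interval second moment estimate of the form
\[
\sideset{}{'}\sum_{j\geq 1} w_j h_{T,\Delta}(t_j) |L(1/2, \sym^2 u_j)|^2 \ll T\Delta (\log T)^{O(1)} + T^{6/5+\varepsilon},
\]
which can be derived via the Kuznetsov trace formula in the spirit of Balkanova--Frolenkov \cite{BF2021}, then splits the analysis into two regimes. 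When $\Delta \geq T^{1/5+\varepsilon}$ the main term dominates, yielding $\Sigma_2 \ll T^{5/3+\varepsilon}\Delta$ and hence $\mathcal{N} \gg T^{1/3-\varepsilon}\Delta$. When $T^\varepsilon \leq \Delta \leq T^{1/5+\varepsilon}$ the error term dominates, yielding $\Sigma_2 \ll T^{28/15+\varepsilon}$ and hence $\mathcal{N} \gg T^{2/15-\varepsilon}\Delta^2$. The main technical obstacle is establishing this second-moment bound with the flat $T^{6/5+\varepsilon}$ error term: this is what pins down both the threshold $\Delta = T^{1/5}$ and the exponent $2/15 = 1/3 - 1/5$ in the narrow-window case.
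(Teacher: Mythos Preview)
Your proposal is correct and follows essentially the same Cauchy--Schwarz argument as the paper: Theorem~\ref{thm:moment} for the lower bound, the Weyl-type subconvexity $L(1/2,u_j)\ll t_j^{1/3+\varepsilon}$ to strip off the $\GL(2)$ factor, and a short-interval second moment for $L(1/2,\sym^2 u_j)$ to control the remaining sum. The only substantive difference is the source of the second moment: the paper invokes the Khan--Young bound \cite{Khan-Young}, valid for $\Delta\geq T^{1/5+\varepsilon}$, and then handles smaller $\Delta$ by enlarging the window to $T^{1/5+\varepsilon}$; your single estimate $T\Delta(\log T)^{O(1)}+T^{6/5+\varepsilon}$ encodes the same information, but you should cite \cite{Khan-Young} rather than \cite{BF2021}, as it is the Khan--Young result that actually delivers the $T^{1/5}$ threshold.
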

\begin{proof}[Proof of Theorem \ref{thm:non-vanishing} under Theorem \ref{thm:moment}]
Theorem \ref{thm:moment} implies that
\begin{equation}\label{eqn: mixedmoment_size}
    \sideset{}{'}\sum_{T-\Delta\leq t_j \leq T+\Delta}L(1/2, u_j)L(1/2,\sym^2 u_j)= T^{1+o(1)}\Delta.
 \end{equation}
In order to prove Theorem \ref{thm:non-vanishing}, we will use the Weyl-type bound $L(1/2,u_j)\ll t_j^{\frac{1}{3}+\varepsilon}$ and Khan and Young's second moment estimate \cite{Khan-Young}: for $\Delta\geq T^{1/5+\varepsilon}$,
\begin{equation}\label{eqn:KhanYoung}
    \sideset{}{'}\sum_{T-\Delta\leq t_j \leq T+\Delta}|L(1/2,\sym^2 u_j)|^2\ll T^{1+\varepsilon}\Delta.
 \end{equation}
For $T^\varepsilon\leq \Delta\leq T^{1/5+\varepsilon}$, the above bound gives
\begin{equation}\label{eqn:secondmoment}
    \sideset{}{'}\sum_{T-\Delta\leq t_j \leq T+\Delta}|L(1/2,\sym^2 u_j)|^2
    \ll \sideset{}{'}\sum_{T-T^{1/5+\varepsilon}\leq t_j \leq T+T^{1/5+\varepsilon}}|L(1/2,\sym^2 u_j)|^2
    \ll T^{6/5+\varepsilon}.
 \end{equation}
Applying the Cauchy--Schwarz inequality, we have
\begin{align*}
    \bigg(\sideset{}{'}\sum_{T-\Delta\leq t_j \leq T+\Delta} & L(1/2, u_j)L(1/2,\sym^2 u_j) \bigg)^2 \\
   & \ll
    \sideset{}{'}\sum_{\substack{T-\Delta \leq  t_j \leq T+\Delta\\ L(1/2,u_j) L(1/2,\sym^2 u_j)\neq 0 }} 1 \cdot \sideset{}{'}\sum_{T-\Delta\leq t_j \leq T+\Delta}|L(1/2, u_j)|^2|L(1/2,\sym^2 u_j)|^2 \\
   & \ll T^{2/3+\varepsilon}
    \sideset{}{'}\sum_{\substack{T-\Delta \leq  t_j \leq T+\Delta\\ L(1/2,u_j) L(1/2,\sym^2 u_j)\neq 0 }} 1
    \cdot
    \sideset{}{'}\sum_{T-\Delta\leq t_j \leq T+\Delta}|L(1/2,\sym^2 u_j)|^2.
\end{align*}
Combining  \eqref{eqn: mixedmoment_size} and the above bounds, we complete the proof.
\end {proof}
Our quantitative result may be improved by establishing Lindel\"{o}f-on-average bound (\ref{eqn:KhanYoung}) in shorter intervals,
especially $\Delta=1,$ or obtaining the sub-Weyl bound for $GL(2)$ $L$-function on spectral aspect. Both of them are standing challenges in the theory of $L$-functions.

%The question of simultaneous non-vanishing of these two central $L$-value has been investigated. For example, we refer to \cite{BBFR2019}, \cite{BBFR2020}, \cite{MS2018simultaneousnon-vanishing} and so on.

\subsection{The holomorphic case}\label{subsec:weight}
Let $H_{4k}$ be the normalised Hecke basis for the space of holomorphic cusp forms of level $1$ and weight $4k$ where $k\geq 1$.
Balkanova,  Bhowmik, Frolenkov and Raulf \cite{BBFR2020} consider the mixed moment
\begin{equation}\label{mixedmoment_holo}
\mathcal{M}_{0,0}:=\sum_{f\in H_{4k}}w_f L(1/2, f)L(1/2, \sym^2f).
\end{equation}
Notice that if the weight is not divided by $4$, the functional equation forces $L(1/2, f)=0$.
Here the harmonic weight $w_f=\frac{2\pi^2}{(4k-1)L(1,\sym^2f)}$  which satisfies $\sum_{f\in H_{4k}}w_f\asymp 1$,
so the expected main term has size $k^{o(1)}$.
Firstly, they established the asymptotic when they sum over the weights $k \asymp K$ \cite{BBFR2019}.
In \cite[Theorem 1.1]{BBFR2020},
they found an explicit formula for $\mathcal{M}_{0,0}$
\begin{equation}\label{eqn: M(0,0)}
\mathcal{M}_{0,0}=2\mathcal{M}_{0,0}^D+2\mathcal{M}_{0,0}^{OD}+\frac{1}{2\pi i}\int_{(0)}G_{2k}(0,s)\dd s+O(k^{-1+\varepsilon}).
\end{equation}
Each of the terms above has an explicit expression.
In particular,
\begin{equation*}
\mathcal{M}_{0,0}^D=\zeta(3/2)\log k+\frac{\zeta(3/2)}{2}\left(\frac{\pi}{2}-3\log(2\pi)+3\gamma+\frac{2\zeta'(3/2)}{\zeta(3/2)}+2\log 2\right)+O(k^{-1}).
\end{equation*}
is the main term. Here $\gamma$ is the Euler constant,
 $\mathcal{M}_{0,0}^{OD}$ is of size $k^{-1/2}$, and
$G_{2k}(0,s)$ has a complicated expression involving $\Gamma$-functions, double Dirichlet series and hypergeometric function ${}_3F_2$.
The analysis of the third term which is given by the integral of $G_{2k}(0,s)$ is the core of their work. They can prove (see \cite[Eq. (6.59)]{BBFR2020})
\begin{equation}\label{eqn: intG_bound}
    \frac{1}{2\pi i}\int_{(0)}G_{2k}(0,s)\dd s\ll \log^3 k.
\end{equation}
However this is not good enough to produce an asymptotic formula for  $\mathcal{M}_{0,0}$, which seems to be very difficult.
In order to prove an asymptotic formula for the mixed moment,
we consider the mixed moment (\ref{mixedmoment_holo}) with an extra average over the weights in short intervals.
By using the same method as in the Maass form case, we have the following asymptotic formula.
\begin{theorem}\label{thm:moment_holo}
  Let $K^\varepsilon \leq \Delta \leq K^{1-\varepsilon}$ and  $h$ is a smooth compact supported function.
  Then we have
  \begin{multline}
     \sum_{k\geq 1}h\left(\frac{4k-K-1}{\Delta}\right)\sum_{f\in H_{4k}}w_f L(1/2, f)L(1/2, \sym^2f)\\
     =a_3\sum_{k\geq 1}h\left(\frac{4k-K-1}{\Delta}\right)\log k
     +a_4\sum_{k\geq 1}h\left(\frac{4k-K-1}{\Delta}\right)+O(K^\varepsilon+\Delta K^{-\frac{1}{4}+\varepsilon}),
  \end{multline}
  where
    $a_3=2\zeta(3/2),$
  and
    $a_4=\zeta(3/2)\left(\frac{\pi}{2}-3\log(2\pi)+3\gamma+\frac{2\zeta'(3/2)}{\zeta(3/2)}+2\log 2\right).$
\end{theorem}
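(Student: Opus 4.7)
The plan is to mirror the proof of Theorem \ref{thm:moment}, with the Petersson trace formula replacing the Kuznetsov formula and the weight-aspect weight $h((4k-K-1)/\Delta)$ replacing the spectral weight $h_{T,\Delta}(t_j)$. First I would apply approximate functional equations for both $L(1/2,f)$ and $L(1/2,\sym^2 f)$ to express their product as a double sum $\sum_{m,n\geq 1}\lambda_f(m)A_f(n)(mn)^{-1/2}V_k(m,n)$, where $A_f(n)=\sum_{ab^2=n}\lambda_f(a^2)$ is the $n$-th Dirichlet coefficient of $L(s,\sym^2 f)$ and $V_k$ is a smooth cutoff truncating $m,n$ at $k^{1+\varepsilon}$. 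Expanding $A_f$ and applying Petersson to $\sum_{f\in H_{4k}}w_f\lambda_f(m)\lambda_f(a^2)$ splits the mixed moment into a diagonal term (arising from $m=a^2$) plus an off-diagonal term involving Kloosterman sums $S(m,a^2;c)$ weighted by $J_{4k-1}(4\pi a\sqrt{m}/c)$.

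The diagonal reduces to a Dirichlet-type sum $\sum_{a,b}a^{-3/2}b^{-1}V_k(a^2,ab^2)$. Representing $V_k$ via a Mellin--Barnes integral and shifting contours past the pole at $u=0$ of $\zeta(u+1)$ produces the leading term $2\zeta(3/2)\log k$ (the coefficient $a_3$) together with lower-order terms involving $\zeta'(3/2)$, the Euler constant $\gamma$, and logarithms arising from the gamma factors of the two $L$-functions at $s=1/2$, which combine to give $a_4$; summing against $h((4k-K-1)/\Delta)$ then yields the claimed main terms. For the off-diagonal, I would first carry out the $k$-sum. Using an integral representation of $J_{4k-1}$ together with stationary phase (or equivalently the Sears--Titchmarsh identity exploited in \cite{BBFR2020}), the averaged Bessel transform $\sum_k h((4k-K-1)/\Delta)J_{4k-1}(x)$ is negligible unless $x\asymp K$, effectively restricting the Kloosterman moduli to $c\ll a\sqrt{m}/K$. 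Poisson summation in $m$ then transforms the remaining expression into a dual sum whose coefficients are supported on primitive pairs, allowing the extended large sieve inequality \eqref{eqn: LSI} to deliver the off-diagonal bound.

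The principal obstacle is twofold. On the main-term side, recovering the explicit constant $a_4$ requires careful bookkeeping of several contour shifts involving $\zeta(3/2)$, $\zeta'(3/2)$, $\gamma$, and logarithms of $\pi$ and $2$ coming from the gamma factors at $s=1/2$; the explicit formula in \cite[Theorem 1.1]{BBFR2020} provides a valuable cross-check, since our $k$-averaged formula must be consistent with it after smoothing. On the off-diagonal side, obtaining the sharp power $K^{-1/4+\varepsilon}$ depends on extracting square-root cancellation from \eqref{eqn: LSI} once Poisson has produced primitive coefficients and the averaged Bessel function has truncated the moduli; the hypothesis $\Delta\geq K^\varepsilon$ is precisely what allows the $k$-average to provide the smoothing necessary for these savings to materialize.
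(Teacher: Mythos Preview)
Your outline matches the paper's proof in \S\ref{sec:holo}: approximate functional equations, Petersson, the diagonal evaluated by contour shifting, and the off-diagonal reduced (via the integral representation of the $k$-averaged Bessel function, which puts $K\cdot H^{\holo}$ into the form $H_0$ of Remark~\ref{remark:Pro. holds for H_0}) to the same dual sums handled by Proposition~\ref{proposition:S} and the two-variable large sieve of Lemma~\ref{lemma:Largesieve2}.

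Two points to correct. First, the paper applies Poisson in \emph{both} the $m$- and $n$-variables (your $m$ and $a$), not just in $m$; this is what produces the quadratic-Gauss-sum arithmetic part $G_\pm$ supported on $(x_1,c)=1$, which is exactly the primitivity hypothesis needed in Lemma~\ref{lemma:Largesieve2}. Poisson in $m$ alone leaves a sum over $a$ with the phase $e(-a^2\bar{x_1}/c)$ that still needs to be dualized before the large sieve applies. Second, you have the provenance of the error terms backwards: the $\Delta K^{-1/4+\varepsilon}$ arises from shifting the \emph{diagonal} contour to $\Re(y)=-1/4$ (see \eqref{eqn:Diagonal_holo asymp}), while the off-diagonal, after Poisson and the large sieve, contributes only $O(K^\varepsilon)$ (equations \eqref{eqn:mathcalR_bound}--\eqref{eqn:S_holo bound}). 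So do not aim for a negative power of $K$ on the Kloosterman side; $K^\varepsilon$ is what the method gives and what the theorem needs.
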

\begin{remark}
From \eqref{eqn: M(0,0)}, providing that $\frac{1}{2\pi i}\int_{(0)}G_{2k}(0,s)\dd s=o(\log k)$, we can establish the asymptotic formula for $\mathcal{M}_{0,0}$. As we expect, this term should be small.
Comparing $\eqref{eqn: M(0,0)}$ with Theorem \ref{thm:moment_holo}, we have
\begin{equation}
\sum_{k\geq 1}h\left(\frac{4k-K-1}{\Delta}\right)\frac{1}{2\pi i}\int_{(0)}G_{2k}(0,s)\dd s\ll K^\varepsilon+\Delta K^{-\frac{1}{4}+\varepsilon}.
\end{equation}
This shows that by adding a short average, the integral of $G_{2k}(0,s)$ is small.
\end{remark}

\subsection{Overview}
We now give a rough sketch of the proof for Theorem \ref{thm:moment}.
The basic idea is similar to Conrey and Iwaniec \cite{CI} and Young \cite{young2014weyl}.
Using the approximate functional equations of $L(1/2, u_j)$ and $L(1/2, \sym^2 u_j)$ and the Kuznetsov trace formula, we get roughly
  \begin{multline*}
     \sideset{}{'}\sum_{j\geq1}w_j h_{T, \Delta}(t_j) L(1/2,u_j) L(1/2,\sym^2 u_j)+(\text{Est.})\\
     =(\text{D. T.})
     +2\sum_{m\leq T^{1+\varepsilon}}\sum_{n\leq T^{1+\varepsilon}}\frac{1}{\sqrt{mn}} \sum_{c\geq1}\frac{S(n^2,\pm m; c)}{c} H^{\pm}\left(\frac{4\pi\sqrt{m}n}{c}\right),
  \end{multline*}
where $(\text{Est.})$ represents the contribution of continuous spectrum, and $(\text{D. T.})$ represents  the contribution of diagonal term which is the main term of the mixed moment.
By the Cauchy--Schwarz inequality, $(\text{Est.})$ is essentially bounded by the fourth moment of Riemann zeta function ($\ll T^{1+\varepsilon}$).
$(\text{D. T.})$ can be evaluated by shifting the lines of integrals and collecting the contributions of the residues.
The off-diagonal here contributes the error term which is different from the case of the $GL(2)$ cubic moment (See e.g. \cite{Q22Cubicmoment}. After applying the Poisson summation formula, the zero-frequency of the off-diagonal terms contributes a part of the main term).
With cleaning up some error terms, taking a smooth dyadic partition of unity, we need to bound the following sum
\[
\mathcal{S}_{\pm}(M, N; C)=\sum_{m\asymp M}
  \sum_{n\asymp N}\sum_{c\asymp C}\frac{S(n^2,\pm m;c)}{c} H^{\pm}\left(\frac{4\pi\sqrt{m}n}{c}\right),
\]
where $M, N$ are dyadic parameters less than $T^{1+\varepsilon}$, and $C$ less than a fixed power of $T$.
Our aim is to show
\[
\mathcal{S}_{\pm}(M, N; C)\ll T^{1+\varepsilon}\sqrt{MN}.
\]\par
  Following \cite{CI}, applying the Poisson summation formula to $m,n$-sum, we obtain the dual form
\begin{equation*}
     \mathcal{S}_{\pm}(M, N; C)=\sum_{c\asymp C}\mathop{\sum\sum}_{x_1, x_2\in \mathbb{Z}}W_{\pm}(x_1, x_2, c)G_{\pm}(x_1, x_2, c),
\end{equation*}
where
\begin{equation*}
     W_{\pm}(x_1, x_2; c )=\int_{t_1\asymp M}\int_{t_2\asymp N}H^\pm\left( \frac{4\pi \sqrt{t_1}t_2}{c}\right)e\left(-\frac{x_1t_1}{c}-\frac{x_2t_2}{c}\right) \dd t_1 \dd t_2
\end{equation*}
and
\begin{equation*}
     G_\pm(x_1, x_2, c)=\frac{1}{c^3}\sum_{a, b\bmod c}S(\pm a, b^2; c)e\left(\frac{ax_1}{c}+\frac{bx_2}{c}\right).
\end{equation*}
  We split the sum into two parts, the oscillatory case of $|x_1|M/C\geq T^\varepsilon$ or $|x_2|N/C\geq T^{\varepsilon}$, and the non-oscillatory case of $|x_1|M/C\leq T^\varepsilon$ and $|x_2|N/C\leq T^{\varepsilon}$.
  The second case is easy to estimate, see \S \ref{subsection: remaining-case}. For the first case,  using the Young's techniques such as the stationary phase method and the Mellin transform to analyze the analytic part  $W_{\pm}(x_1, x_2; c )$, we get
\begin{equation}
W_{\pm}(x_1, x_2; c )=\frac{cM\Delta}{x_2}e\left(\mp\frac{x_1{x_2}^2}{4c}\right)\int_{|u|\ll U_{\pm}}\lambda(u)\left(\frac{x_1{x_2}^2}{4c}\right)^{iu}\dd u+O(T^{-10}).
\end{equation}
where $\lambda(u)\ll 1$ and $U_{\pm}\ll\frac{T^{1+\varepsilon}}{\Delta}$, for some essential range of $x_1, x_2$, see Lemma \ref{lemma:Wproperty}.
\par
  To calculate the arithmetic part $G_{\pm}$, opening the Kloosterman sum, by the orthogonality of additive characters, we get
  a complete exponential sum of quadratic polynomial. And we can show that $G_{\pm}$ has the form (see (\ref{eqn: G=G_1,2}))
\begin{equation}
G_{\pm}(x_1, x_2; c )=e\left(\pm\frac{x_1{x_2}^2}{4c}\right)\sum_{i=1, 2} G_{i}(x_1; c)a(x_1)b(x_2).
\end{equation}
where $a(x_1), b(x_2)\ll 1$, $G_{i}(x_1; c)$  is supported on $(x_1, c)=1$ and satisfies the second moment estimate
\begin{equation*}
\sum_{c\asymp C}|G_i(x_1; c)|^2\ll \frac{1}{C^2},
\end{equation*}\par
  Combining the analytic part and the arithmetic part, letting  $x_1\asymp X_1$ and $x_2\asymp X_2$, we get the total contribution is bounded by
\begin{equation}
    \mathop{\sum\sum}_{X_1, X_2(\star)}\frac{CM\Delta}{X_2}
     \int_{|u|\ll U_{\pm}}\left|\sum_{c\asymp C}\sum_{x_1\asymp X_1}\sum_{x_2\asymp X_2}G_{i}(x_1; c)a(x_1)b(x_2)\lambda(u)\left(\frac{x_1{x_2}^2}{4c}\right)^{iu}\right|\dd u.
\end{equation}
 where the $\star$ means some restrictions on dyadic parameters $X_1, X_2$. Here we can't separate variables $x_1$ and $c$ in the arithmetic part. However we can prove the large sieve inequality in two dimension, see Lemma \ref{lemma:Largesieve2}.  The estimate in Lemma \ref{lemma:Largesieve2} requires that the array supports on coprime lattice, and $G_{i}$ just meets the requirement.
\par
 Applying the Cauchy--Schwarz inequality and the large sieve inequalities,  we get the bound
 \[
        \Delta \sqrt{MN}(U_{\pm}+X_1C)^{\frac{1}{2}}(U_{\pm}+X_2)^{\frac{1}{2}}T^\varepsilon.
 \]
An easy calculation shows this is bounded by $T^{1+\varepsilon}\sqrt{MN}$. Therefore the total off-diagonal term is bounded by $T^{1+\varepsilon}$.

\subsection{Plan for this paper}
The rest of this paper is organized as follows.
In \S \ref{sec:preliminaries}, we give a review of the theory of automorphic forms and $L$-functions, the Kuznetsov trace formula and some useful techniques. In \S \ref{sec:appyingKTT}, we begin our proof with applying the trace formula.  In \S \ref{sec:diagonal}, we calculate the diagonal terms as the main term of our mixed moment. In \S \ref{sec:offdiagonal}, we deal with the contribution of the off-diagonal term including the analysis of the analytic parts and the arithmetic part, application of the large sieve inequality and the estimation of the remaining case. This ends the proof of Theorem \ref{thm:moment}. In \S \ref{sec:holo}, we turn to the holomorphic case and give a sketch of proof of Theorem \ref{thm:moment_holo}, since it is roughly repeating the progress of \S \ref{sec:diagonal}.

\medskip
\textbf{Notation.}
Throughout the paper, $\varepsilon$ is an arbitrarily small positive number;
all of them may be different at each occurrence.
As usual, $e(x)=e^{2\pi i x}$.
We use $y\asymp Y$ to mean that $c_1 Y\leq y\leq c_2 Y$ for some positive constants $c_1$ and $c_2$.
The notation $x\lll y$ means we have $x\leq cy$ for some sufficiently small constant $c>0$,
and $x\ggg y$ means we have $x\geq Cy$ for some  sufficiently large  constant $C>0$

\section{Preliminaries}\label{sec:preliminaries}

\subsection{Gamma function and polygamma function}
We introduce some special functions for future use.
Let $\Gamma(z)$ be the Gamma function. The Stirling's formula gives
\[
  \Gamma(z) = z^{z-1/2} e^{-z} \sqrt{2\pi} \left( 1+\frac{1}{12 z} +\frac{1}{288 z^2} +O\left( \frac{1}{|z|^3} \right) \right).
\]
Let $\psi(z)$ be the polygamma function that is given by
\[
    \psi(z)=\frac{\Gamma'(z)}{\Gamma(z)}.
\]
It satisfies (see \cite[(8.365), (8.366)]{Table_of})
\[
    \psi(3/4-n)=\psi(1/4+n)+\pi,
\]
for any integer $n$ and
\[
    \psi(x+1)=\frac{1}{x}+\psi(x).
\]
Moreover,
\begin{equation}\label{eqn: psi1/4}
    \psi(1/4)=-\gamma-\frac{\pi}{2}-3\log 2,
\end{equation}
and
\begin{equation}\label{eqn: psi3/4}
    \psi(3/4)=-\gamma+\frac{\pi}{2}-3\log 2,
\end{equation}
 where $\gamma$ is the Euler constant .

\subsection{Approximate functional equations}
Let $E_t(z)=E(z,1/2+it)$ be the Eisenstein series and $t\in \mathbb{R}$, we have
\[
    L(s, E_t)=\zeta(s+it)\zeta(s-it),
\]
and
\[
    L(s, \sym^2E_t)=\zeta(s)\zeta(s+2it)\zeta(s-2it).
\]
Both of them have analytic continuations to $\mathbb{C}$ and satisfy the functional equations
\[
  \Lambda(s,E_t) := \gamma(s,E_t) L(s,u_j) = \Lambda(1-s,E_t),
\]
and
\[
  \Lambda(s,\sym^2E_t) := \gamma(s,\sym^2E_t) L(s,\sym^2E_t) = \Lambda(1-s,\sym^2E_t).
\]
where
$\gamma(s,E_t) =  \Gamma_\mathbb{R}(s+it) \Gamma_\mathbb{R}(s-it)$,
and $\gamma(s,\sym^2E_t) = \Gamma_\mathbb{R}(s) \Gamma_\mathbb{R}(s+2it) \Gamma_\mathbb{R}(s-2it)$.
\begin{lemma}\label{lemma:AFE2}
  Let $u_j$ be as in Theorem \ref{thm:moment}. Then we have
  \[
    L(1/2,u_j) = 2\sum_{m\geq1} \frac{\lambda_j(m)}{m^{1/2}} W\left(m,t_j\right)
  \]
  and
   \[
    L(1/2, E_t)=|\zeta(1/2+it)|^2 = 2\sum_{m\geq1} \frac{\eta_t(m)}{m^{1/2}} W\left(m,t\right),
  \]
  where
  \[
    W(m,t_j) = \frac{1}{2\pi i} \int_{(2)} \frac{\Gamma_\mathbb{R}(1/2+s+it_j)\Gamma_\mathbb{R}(1/2+s-it_j)} {\Gamma_\mathbb{R}(1/2+it_j)\Gamma_\mathbb{R}(1/2-it_j)}
    \frac{1}{m^s} G(s) \frac{\dd s}{s},
  \]
  and $W(m,t)$ is defined as above with $E_t$ replacing $u_j$.
   It is an even function on $t_j$ (or $t$), $G(s)=e^{s^2}$ and $\eta_t(m)=\sum_{ab=m}(\frac{a}{b})^{it}$.
  Let $T\geq10$ be large. For $t>0$ and $t\asymp T$, we have
  \[
    W(m,t) \ll_A \left(1+\frac{m}{T}\right)^{-A}, \quad \textrm{for any } A>0.
  \]
  Moreover, we have
  \[
    W(m,t) = \frac{1}{2\pi i} \int_{\varepsilon-i(\log T)^2}^{\varepsilon+i(\log T)^2} \left(\frac{t}{2\pi m}\right)^{s}  G(s) \frac{\dd s}{s} + O(T^{-2+\varepsilon}).
  \]
\end{lemma}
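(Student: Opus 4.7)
The plan is to prove the two approximate functional equations by the standard contour-shift recipe, and then extract the stated analytic properties of $W(m,t)$ via Stirling. For the Maass form identity, I would start from
\[
I := \frac{1}{2\pi i} \int_{(2)} \Lambda(1/2+s, u_j) G(s) \frac{\dd s}{s},
\]
expand $L(1/2+s, u_j) = \sum_{m} \lambda_j(m)/m^{1/2+s}$ on $\Re s = 2$, and interchange the sum and integral to realize $I$ as $\gamma(1/2, u_j)$ times the sum on the right-hand side of the claimed formula. Separately, shift the contour of $I$ to $\Re s = -2$. The only pole crossed is at $s=0$, with residue $\Lambda(1/2, u_j)$ (since $G(0)=1$). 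On the shifted line, substituting $s \mapsto -s$ and using $G(-s) = G(s)$ together with the functional equation $\Lambda(1/2-s, u_j) = \Lambda(1/2+s, u_j)$ identifies the shifted integral with $-I$, so $I = \tfrac{1}{2}\Lambda(1/2, u_j)$. Dividing by $\gamma(1/2, u_j)$ yields the first claim. The Eisenstein case is completely analogous after writing $L(s, E_t) = \sum_m \eta_t(m) m^{-s}$; the extra residues at $s = 1/2 \mp it$ from the poles of $\zeta(1/2+s\pm it)$ are harmless because $|G(1/2 \mp it)| = e^{1/4 - t^2}$ is super-polynomially small in $T$.

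Since $W(m,t)$ is defined by the same integrand in both cases (with $t$ in place of $t_j$), the remaining assertions need only be verified once. Evenness is immediate from the symmetry $t \mapsto -t$ of the integrand. For the decay $W(m,t) \ll_A (1 + m/T)^{-A}$ with $t \asymp T$, I would shift the contour to $\Re s = A$ (crossing no poles): Stirling bounds the gamma ratio by $O(T^A)$ uniformly on the truncated line, $e^{s^2}$ enforces rapid vertical decay, and $m^{-s}$ contributes $m^{-A}$, which together yield $W(m,t) \ll_A (T/m)^A$; combined with the trivial bound $W(m,t) \ll 1$ this gives the claim (with adjusted constants).

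For the final asymptotic formula, I would first truncate the defining contour at $|\Im s| = (\log T)^2$ with error $O(T^{-N})$ (again via the $e^{s^2}$ decay), then shift from $\Re s = 2$ to $\Re s = \varepsilon$, crossing no poles. The heart of the matter is the Stirling estimate on the truncated line:
\[
\frac{\Gamma_{\mathbb{R}}(1/2+s+it)\,\Gamma_{\mathbb{R}}(1/2+s-it)}{\Gamma_{\mathbb{R}}(1/2+it)\,\Gamma_{\mathbb{R}}(1/2-it)} = \left(\frac{t}{2\pi}\right)^{s} \left(1 + O\!\left(\frac{|s|^{2}}{t^{2}}\right)\right).
\]
The subtlety I expect to be the main obstacle is the cancellation of the $O(1/t)$ corrections that appear individually in each gamma factor: expanding via $\Gamma(z+a)/\Gamma(z) = z^{a}\bigl(1 + a(a-1)/(2z) + O(|a|^{4}/|z|^{2})\bigr)$ at $z = 1/4 \pm it/2$ and pairing the $\pm it$ contributions uses $1/(2+4it) + 1/(2-4it) = O(t^{-2})$ to annihilate the leading correction; without this cancellation, one would only reach error $O(T^{-1+\varepsilon})$. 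Integrating the remaining $O(|s|^{2}/t^{2})$ against $G(s)/s$ over $|\Im s| \leq (\log T)^{2}$ then yields the stated $O(T^{-2+\varepsilon})$.
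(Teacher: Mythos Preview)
Your proposal is correct and follows essentially the same approach as the paper, which simply cites Iwaniec--Kowalski \cite[Theorem 5.3 and Proposition 5.4]{IwaniecKowalski2004analytic} for the approximate functional equation and decay bound, and invokes Stirling for the final asymptotic. Your write-up supplies the details that the paper omits, including the correct handling of the extra Eisenstein residues (negligible via $|G(1/2\mp it)|=e^{1/4-t^2}$) and the key $\pm it$ pairing that upgrades the Stirling error from $O(T^{-1+\varepsilon})$ to $O(T^{-2+\varepsilon})$.
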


\begin{proof}
  The proof is the same as Iwaniec--Kowalski \cite[Theorem 5.3 and Proposition 5.4]{IwaniecKowalski2004analytic}.
  The last statement is a consequence of Stirling's formula.
\end{proof}

\begin{lemma}\label{lemma:AFE3}
  Let $u_j$ be as in Theorem \ref{thm:moment}. Then we have
  \[
    L(1/2,\sym^2 u_j) = 2\sum_{n\geq1} \frac{\lambda_j(n^2)}{n^{1/2}} V\left(n,t_j\right),
  \]
  and
  \[
    L(1/2,\sym^2 E_t)=\zeta(1/2)|\zeta(1/2+2it)|^2 = 2\sum_{n\geq1} \frac{\eta_t(n^2)}{n^{1/2}} V\left(n,t\right),
  \]
  where
  \[
    V(n,t_j) = \frac{1}{2\pi i} \int_{(2)} \frac{\Gamma_\mathbb{R}(1/2+s+2it_j)\Gamma_\mathbb{R}(1/2+s)\Gamma_\mathbb{R}(1/2+s-2it_j)} {\Gamma_\mathbb{R}(1/2+2it_j)\Gamma_\mathbb{R}(1/2)\Gamma_\mathbb{R}(1/2-2it_j)} \frac{\zeta(1+2s)}{n^s} G(s) \frac{\dd s}{s}
  \]
  and $V(n,t)$ is defined as above with $E_t$ replacing $u_j$.
  It is an even function on $t_j$ (or $t$).
  Let $T\geq10$ be large. For $t>0$ and $t\asymp T$, we have
  \[
    V(n,t) \ll_A \left(1+\frac{n}{T}\right)^{-A} \log T, \quad \textrm{for any } A>0.
  \]
  and $V(n,t)$ is defined as above with $E_t$ replacing $u_j$.
  Moreover, we have
  \[
    V(n,t) = \frac{1}{2\pi i} \int_{\varepsilon-i(\log T)^2}^{\varepsilon+i(\log T)^2} \left(\frac{t}{\pi n}\right)^s \frac{\Gamma_\mathbb{R}(1/2+s)} { \Gamma_\mathbb{R}(1/2)} \zeta(1+2s) G(s) \frac{\dd s}{s} + O(T^{-2+\varepsilon}).
  \]
\end{lemma}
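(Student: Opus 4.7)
The plan is to derive both formulas by the standard Mellin transform and contour-shift technique, mirroring the proof of Lemma~\ref{lemma:AFE2} and following Iwaniec--Kowalski \cite[Theorem~5.3 and Proposition~5.4]{IwaniecKowalski2004analytic}. I would begin with
$$I_j := \frac{1}{2\pi i}\int_{(2)} \Lambda(1/2+s,\sym^2 u_j)\,G(s)\,\frac{\dd s}{s},$$
where $G(s)=e^{s^2}$. Since $\Lambda(w,\sym^2 u_j)$ is entire for Maass cusp forms on $\mathrm{SL}_2(\mathbb{Z})$, the only pole of the integrand in the strip $-2\le \Re(s)\le 2$ sits at $s=0$ with residue $\Lambda(1/2,\sym^2 u_j)$. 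Shifting the contour to $\Re(s)=-2$, then substituting $s\mapsto -s$ and invoking the functional equation $\Lambda(1/2-s,\sym^2 u_j)=\Lambda(1/2+s,\sym^2 u_j)$ along with $G(-s)=G(s)$, one sees the shifted integral equals $-I_j$. Hence $\Lambda(1/2,\sym^2 u_j)=2I_j$. Dividing by $\gamma(1/2,\sym^2 u_j)$, expanding $L(1/2+s,\sym^2 u_j)=\zeta(1+2s)\sum_n \lambda_j(n^2) n^{-1/2-s}$ absolutely on $\Re(s)=2$, and interchanging sum and integral yields the claimed series with $V(n,t_j)$. The Eisenstein case is formally identical after replacing $\lambda_j(n^2)$ by $\eta_t(n^2)$, since these are the Hecke eigenvalues of $E_t$ at squares.

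The decay estimate $V(n,t)\ll_A (1+n/T)^{-A}\log T$ comes from shifting the defining contour of $V(n,t)$ to $\Re(s)=A$ large, using the super-exponential decay of $G(s)=e^{s^2}$ on vertical lines, the Stirling bound $\Gamma((1/2+s)/2\pm it)/\Gamma(1/4\pm it)\ll t^{A/2}$ on that line, and $|\zeta(1+2s)|\ll 1$ for $\Re(s)\gg 1$. The $\log T$ factor arises from $|\zeta(1+2s)|\ll \log T$ near $\Re(s)=\varepsilon$, which is needed for small $n$ when the contour cannot be pushed far to the right.

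The final ``moreover'' claim is an application of Stirling on $|\Im s|\le (\log T)^2$ with $\Re(s)=\varepsilon$: with $z_\pm = 1/4 \pm it_j$, one has
$$\frac{\Gamma\bigl((1/2+s)/2+it_j\bigr)\,\Gamma\bigl((1/2+s)/2-it_j\bigr)}{\Gamma(1/4+it_j)\,\Gamma(1/4-it_j)} = z_+^{s/2} z_-^{s/2}\bigl(1+O(T^{-2+\varepsilon})\bigr) = t_j^{s}\bigl(1+O(T^{-2+\varepsilon})\bigr),$$
using $(it_j)^{s/2}(-it_j)^{s/2}=t_j^s$ with principal branches. Combined with the $\pi^{-s}$ factor arising from the bookkeeping of $\Gamma_\mathbb{R}(z)=\pi^{-z/2}\Gamma(z/2)$, this produces the factor $(t_j/\pi)^{s}$. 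Truncating the contour to $|\Im s|\le (\log T)^2$ costs only $O(T^{-N})$ by the super-exponential decay of $G$, so one obtains the stated integral representation with aggregate error $O(T^{-2+\varepsilon})$.

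The main anticipated obstacle is the Eisenstein case. Because $L(s,\sym^2 E_t)=\zeta(s)\zeta(s+2it)\zeta(s-2it)$ is not entire, $\Lambda(1/2+s,\sym^2 E_t)$ has three additional simple poles on the right half of the shifted strip, at $s\in\{1/2,\,1/2\pm 2it\}$, and mirror poles on the left via the functional equation. One must bookkeep these residues, which pair under $s\mapsto -s$ via $\Lambda(1/2+s)=\Lambda(1/2-s)$ and $G(-s)=G(s)$. A careful analysis is required either to exhibit a cancellation producing the clean form stated, or to isolate the residual terms as a lower-order contribution that is absorbed when the AFE is substituted into the trace formula and averaged over $t$.
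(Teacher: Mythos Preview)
Your proposal is correct and follows the same approach as the paper, which merely cites Iwaniec--Kowalski \cite[Theorem~5.3 and Proposition~5.4]{IwaniecKowalski2004analytic} and invokes Stirling's formula for the final asymptotic; you have simply written out the standard details.

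Regarding your anticipated obstacle in the Eisenstein case: the residues at $s=1/2\pm 2it$ carry the factor $G(1/2\pm 2it)=e^{1/4-4t^2\pm 2it}$, which is super-exponentially small for $t\asymp T$. The paired residues at $s=\pm 1/2$ are not killed by $G$, but after dividing by $\gamma(1/2,\sym^2 E_t)$ they contribute a term of size $O(t^{1/2+\varepsilon})$; in the paper's application (see \S\ref{sec:appyingKTT}) the entire continuous-spectrum contribution is bounded crudely by $T^{1+\varepsilon}$ via Cauchy--Schwarz and the fourth moment of $\zeta$, so any such polar term is absorbed there. The paper does not address this point explicitly, and the equality for $L(1/2,\sym^2 E_t)$ as stated should be read modulo these negligible polar corrections.
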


\begin{proof}
  The proof is the same as Iwaniec--Kowalski \cite[Theorem 5.3 and Proposition 5.4]{IwaniecKowalski2004analytic}.
  The last statement is a consequence of Stirling's formula.
\end{proof}
For the convenience, we refine $W(m, t)$ and $V(n,t)$ by approximating $t$ by $T$.
Since that the weight function $h_{T, \Delta}(t)$ concentrates on $|t-T|\ll \Delta \log^2 T$,
we expanse the factor $t^s$ into Taylor series, and get
\[
t^s=T^s\sum_{l=0}^{N}P_l(s)\left(\frac{t^2-T^2}{T^2}\right)^l+O\left((1+\frac{|s|}{2})^{N+1}\left(\frac{t^2-T^2}{T}\right)^N\right).
\]
for a certain polynomial $P_l(s)$ of degree $\leq l$. Define
\[
 W_l(m, T)=\frac{1}{2\pi i} \int_{\varepsilon-i(\log T)^2}^{\varepsilon+i(\log T)^2} \left(\frac{T}{2\pi m}\right)^s P_l(s) G(s) \frac{\dd s}{s},
\]
and
\[
 V_l(n, T)=\frac{1}{2\pi i} \int_{\varepsilon-i(\log T)^2}^{\varepsilon+i(\log T)^2} \left(\frac{T}{\pi n}\right)^s \frac{\Gamma_\mathbb{R}(1/2+s)} { \Gamma_\mathbb{R}(1/2)} \zeta(1+2s)P_{l}(s) G(s) \frac{\dd s}{s}.
\]
We have
\[
   W_l(m, T)\ll\left(1+\frac{m}{T}\right)^{-A} \quad \text{and} \quad V_l(n, T)\ll\left(1+\frac{n}{T}\right)^{-A}\log T,
\]
\begin{equation}\label{eqn:Wt=WT}
W(m,t)=\sum_{l=0}^{N}W_l(m,T)\left(\frac{t^2-T^2}{T^2}\right)^{l}+O\left(\left(1+\frac{m}{T}\right)^{-A}\left(\frac{t^2-T^2}{T^2}\right)^{N+1}\right),
\end{equation}
and
\begin{equation}\label{eqn:Vt=VT}
V(n,t)=\sum_{l=0}^{N}V_l(n,T)\left(\frac{t^2-T^2}{T^2}\right)^{l}+O\left(\left(1+\frac{n}{T}\right)^{-A}\left(\frac{t^2-T^2}{T^2}\right)^{N+1}\log T\right).
\end{equation}

\subsection{The Kuznetsov trace formulas }
%
%Petersson trace formula

%Kuznetsov trace formula

Let
\[
  S(a,b;c) = \sideset{}{^*}\sum_{d \bmod c} e\left(\frac{ad+b\bar{d}}{c}\right)
\]
be the classical Kloosterman sum. For any $m,n\geq1$, and any test function
$h(t)$ which is even and satisfies the following conditions:
\begin{itemize}
  \item [(i)] $h(t)$ is holomorphic in $|\Im(t)|\leq 1/2+\varepsilon$,
  \item [(ii)] $h(t)\ll (1+|t|)^{-2-\varepsilon}$ in the above strip,
\end{itemize}
we have the following Kuznetsov formula (see Conrey--Iwaniec \cite[Eq. (3.17)]{CI} for example).
\begin{lemma}\label{lemma:KTF}
  For $m,n\geq1$,   we have
  \begin{equation*}
    \begin{split}
        & {\sum_j}'h(t_j)\omega_j \lambda_j(m)\lambda_j(n) + \frac{1}{4\pi}\int_{-\infty}^{\infty}h(t)\omega(t)\eta_t(m)\eta_t(n)
        \dd t \\
        & \hskip 120pt = \frac{1}{2}\delta_{m,n}H
        + \frac{1}{2} \sum_{\pm} \sum_{c\geq1}\frac{S(n,\pm m;c)}{c} H^{\pm}\left(\frac{4\pi\sqrt{mn}}{c}\right),
    \end{split}
  \end{equation*}
  where $\sum'$ restricts to the even Hecke--Maass cusp forms, $\delta_{m,n}$ is the Kronecker symbol,
  \begin{equation}\label{eqn: H}
    \begin{split}
       H & = \frac{2}{\pi}\int_{0}^{\infty} h(t) \tanh(\pi t)t \dd t, \\
       H^+(x) & = 2i \int_{-\infty}^{\infty} J_{2it}(x)\frac{h(t)t}{\cosh(\pi t)} \dd t, \\
       H^-(x) & = \frac{4}{\pi} \int_{-\infty}^{\infty} K_{2it}(x)\sinh(\pi t)h(t)t \dd t,
    \end{split}
  \end{equation}
  and $J_\nu(x)$ and $K_\nu(x)$ are the standard $J$-Bessel function and $K$-Bessel function respectively.
\end{lemma}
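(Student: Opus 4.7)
The statement is the classical Kuznetsov trace formula with harmonic weights for $\mathrm{SL}_2(\mathbb{Z})$, as quoted from Conrey--Iwaniec \cite{CI}. The plan is to derive it by the Poincar\'e series method. I would begin with the Poincar\'e series
\[
U_m(z,s) = \sum_{\gamma\in\Gamma_\infty\backslash\mathrm{SL}_2(\mathbb{Z})} e(m\gamma z)\Im(\gamma z)^s,
\]
which is automorphic and admits meromorphic continuation in $s$, and compute the inner product $\langle U_m, U_n\rangle$ in two different ways.

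On the spectral side, I would use the Plancherel decomposition of $L^2(\Gamma\backslash\mathbb{H})$ into cusp forms $\{u_j\}$ and the Eisenstein continuous spectrum. Unfolding $\langle U_m, u_j\rangle$ to $\Gamma_\infty\backslash\mathbb{H}$ reduces to a Mellin-type integral $\int_0^\infty y^{s-3/2}e^{-2\pi m y}K_{it_j}(2\pi m y)\,\dd y$, which evaluates in closed form as a product of $\Gamma$-factors times the Fourier coefficient $\rho_j(m)$. Using the Hecke multiplicativity $\rho_j(m)=\rho_j(1)\lambda_j(m)/\sqrt{m}$ together with the Hoffstein--Lockhart identity $|\rho_j(1)|^2=\cosh(\pi t_j)/(\pi L(1,\mathrm{sym}^2 u_j))$ produces the harmonic weight $\omega_j$ and a factor $1/\cosh(\pi t_j)$ on the cuspidal contribution; the analogous computation for the Eisenstein spectrum yields the $\eta_t(m)\eta_t(n)$ integral on the left.

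On the geometric side, the Bruhat decomposition of $\mathrm{SL}_2(\mathbb{Z})$ produces a diagonal term from the identity coset plus a Kloosterman-indexed series $\sum_c c^{-2s}S(m,n;c)\,K(s;4\pi\sqrt{mn}/c)$, with $K(s;x)$ an explicit Bessel-type integral coming from integration along the cuspidal strip. Equating the spectral and geometric expressions gives a trace identity for the specific test function coming from $y^{s}$. The final step is Mellin inversion in $s$: multiply by an auxiliary factor and integrate along a vertical contour so that, on the spectral side, the $\Gamma$-factors collapse to $h(t_j)t_j$, while on the geometric side the kernel $K(s;x)$ is transformed, via the Mellin--Barnes representations of $J_{2it}$ and $K_{2it}$, into precisely $H^+(x)$ for the $+$ Kloosterman sums and $H^-(x)$ for the $-$ ones.

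The main technical obstacle is to verify absolute convergence on both sides and to justify the contour shifts within a common strip. Conditions (i)--(ii) on $h(t)$---holomorphy in $|\Im t|\leq 1/2+\varepsilon$ and decay $(1+|t|)^{-2-\varepsilon}$---are arranged so that the spectral sums converge absolutely after unfolding and so that the Bessel integrals defining $H^\pm(x)$ converge uniformly in $x$ away from $0$. A secondary subtlety is that only \emph{even} Maass forms contribute to $\langle U_m, U_n\rangle$ for $m,n>0$ (odd forms have $\rho_j(-n)=-\rho_j(n)$, and the Poincar\'e series pairs symmetrically), which accounts for the restriction $\sum'$ to even forms in the final formula.
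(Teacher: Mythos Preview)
The paper does not prove this lemma at all: it simply quotes the formula from Conrey--Iwaniec \cite[Eq.~(3.17)]{CI} as a black box. Your proposal goes well beyond that, outlining the standard Poincar\'e-series derivation (compute $\langle U_m,U_n\rangle$ spectrally and geometrically, then Mellin-invert in $s$ to reach a general test function $h$). That route is correct in outline and is essentially how the formula is proved in the references behind \cite{CI}; for the purposes of this paper a citation is all that is needed.

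One point in your sketch is not quite right and would cause trouble if you tried to fill in details. You claim that only even Maass forms contribute to $\langle U_m,U_n\rangle$ for $m,n>0$ because ``the Poincar\'e series pairs symmetrically.'' In fact $\langle U_m,u_j\rangle$ is nonzero for odd $u_j$ as well; the inner product picks up the Fourier coefficient $\rho_j(m)$, which does not vanish for odd forms. The restriction to even forms in the stated formula comes instead from \emph{symmetrising}: one writes down the ordinary Kuznetsov formula for the pair $(m,n)$ and again for $(-m,n)$ (equivalently, pairs $U_m+U_{-m}$ against $U_n$), and adds. On the spectral side $\rho_j(-m)=\epsilon_j\rho_j(m)$ with $\epsilon_j=\pm1$ the parity, so odd forms cancel; on the geometric side the two Kloosterman terms $S(n,m;c)$ and $S(n,-m;c)$ survive, which is exactly the $\sum_{\pm}$ structure in the lemma. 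If you revise the outline to make this symmetrisation explicit, the argument goes through.
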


To deal with $H^\pm(x)$ we will use the following lemmas.

\begin{lemma}\label{lemma:H+}
  Let $H^+$ be given by \eqref{eqn: H} with $h(t)= h_{T,\Delta}(t)$.
  %Then for $x\ll T$
  There exists a function $g$ depending on $T$ and $\Delta$ satisfying $g^{(j)}(y)\ll_{j,A} (1+|y|)^{-A}$,
  so that
  \begin{equation}\label{eqn:H^+=int}
    H^+(x) = \Delta T\int_{|v|\leq \frac{\Delta^\varepsilon}{\Delta}} \cos(x\cosh(v))e\left(\frac{vT}{\pi}\right)g(\Delta v) \dd v + O(T^{-A}).
  \end{equation}
  Furthermore, $H^+(x)\ll T^{-A}$ unless $x\gg \Delta T^{1-\varepsilon}$,
  in which case we have  $H^+(x)\ll T\Delta x^{-1/2}$.
\end{lemma}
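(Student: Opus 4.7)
The plan is to convert the $t$-integral defining $H^+(x)$ into a $v$-integral via an integral representation of $J_{2it}(x)$, so that the Gaussian weight $h_{T,\Delta}(t)$ becomes a Gaussian in $v$ via Fourier inversion. Starting from the Hankel integral representations and writing $J_{2it} = \tfrac12(H^{(1)}_{2it} + H^{(2)}_{2it})$ with $e^{\pm\pi t}=\cosh(\pi t)\pm\sinh(\pi t)$, one obtains
\[
\frac{J_{2it}(x)}{\cosh(\pi t)} = \frac{\tanh(\pi t)}{\pi i}\int_{\mathbb{R}}\cos(x\cosh v)\,e^{-2itv}\,\dd v + \frac{1}{\pi}\int_{\mathbb{R}}\sin(x\cosh v)\,e^{-2itv}\,\dd v,
\]
valid for $x>0$ in the distributional sense. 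Inserting this into $H^+(x)$ and interchanging the $t$- and $v$-integrals gives
\[
H^+(x) = \frac{2}{\pi}\int_{\mathbb{R}}\cos(x\cosh v)\, I_1(v)\,\dd v + \frac{2}{\pi}\int_{\mathbb{R}}\sin(x\cosh v)\, I_2(v)\,\dd v,
\]
with $I_1(v) = \int h_{T,\Delta}(t)\, t\tanh(\pi t)\cos(2tv)\,\dd t$ and $I_2(v) = \int h_{T,\Delta}(t)\, t\sin(2tv)\,\dd t$.

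The next step is to evaluate $I_1, I_2$ explicitly. Since $\tanh(\pi t) = \sgn(t) + O(e^{-\pi|t|})$, I would replace $\tanh(\pi t)$ by $\sgn(t)$ (error $O(e^{-cT})$); decompose $h_{T,\Delta}(t) = h_0(t-T) + h_0(t+T)$ with $h_0(u) = e^{-u^2/\Delta^2}$; substitute $u = t\mp T$; expand the cosines and sines via addition formulas; and use $\int e^{-u^2/\Delta^2}\cos(2uv)\,\dd u = \sqrt{\pi}\Delta e^{-\Delta^2 v^2}$ together with its $v$-derivative. This yields $I_1(v)$ and $I_2(v)$ as linear combinations of $e^{-\Delta^2 v^2}\cos(2Tv)$, $e^{-\Delta^2 v^2}\sin(2Tv)$, and $\Delta^2 v$ times the same. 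Substituting back, $I_1$ is even in $v$ and $I_2$ is odd, so the $\sin(x\cosh v)$ contribution vanishes by symmetry; the $\cos(x\cosh v)$ terms combine, via the factor $(1+iy\Delta/T)$ and conjugate-symmetry under $v\to -v$, to give
\[
H^+(x) = \Delta T\int_{\mathbb{R}}\cos(x\cosh v)\, e(vT/\pi)\, g(\Delta v)\,\dd v + O(e^{-cT}),
\]
with $g(y)$ proportional to $(1 + iy\Delta/T)e^{-y^2}$. This $g$ is Schwartz with bounds uniform in $T,\Delta$ (since $\Delta/T\le 1$ in our range); its Gaussian decay permits truncation to $|v|\le\Delta^\varepsilon/\Delta$ (using $\Delta\ge T^\varepsilon$) at cost $O(T^{-A})$.

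For the last two assertions, I would apply integration by parts and stationary phase to the truncated $v$-integral. Splitting $\cos(x\cosh v)$ into exponentials, the phase of each piece is $\pm x\cosh v+2Tv$, with derivative $\pm x\sinh v + 2T$. For $|v|\le\Delta^\varepsilon/\Delta$ one has $|x\sinh v|\le x\Delta^{\varepsilon-1}$, so the derivative stays $\gg T$ whenever $x\ll T\Delta^{1-\varepsilon}$; repeated integration by parts then yields $H^+(x)\ll T^{-A}$. For $x\gg T\Delta^{1-\varepsilon}$ a stationary point $v_0$ exists with $\sinh v_0 = \mp 2T/x$, so $|v_0|\asymp T/x$ and the second derivative at $v_0$ has size $\asymp x$; standard stationary phase then delivers $H^+(x)\ll \Delta T/\sqrt{x}$. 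The main technical obstacle is the first step: the identity for $J_{2it}(x)/\cosh(\pi t)$ involves conditionally convergent integrals in $v$, so the Fubini interchange needs careful justification, e.g., via a convergence factor $e^{-\eta v^2}$ sent to $0^+$ only after the Gaussian $t$-integration has been carried out.
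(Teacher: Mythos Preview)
The paper does not give its own proof here; it simply cites Young \cite[Lemma 7.1]{young2014weyl} and Huang \cite[Lemma 4.2]{Huang2021}. Your sketch is a correct reconstruction of Young's argument (Mehler--Sonine/Hankel integral representation for $J_{2it}$, Gaussian Fourier inversion in $t$, truncation via the Schwartz decay of $g$, then non-stationary/stationary phase in $v$), so the proposal is sound.
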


\begin{proof}
  This is Young~\cite[Lemma 7.1]{young2014weyl}.
  See also Huang \cite[Lemma 4.2]{Huang2021}.
\end{proof}

\begin{lemma}\label{lemma:H-}
  Let $H^-$ be given by \eqref{eqn: H} with $h(t)= h_{T,\Delta}(t)$.
  %Then for $x\ll T$
  There exists a function $g$ depending on $T$ and $\Delta$ satisfying $g^{(j)}(y)\ll_{j,A} (1+|y|)^{-A}$,
  so that
  \begin{equation}\label{eqn: H^-=}
    H^-(x) = \Delta T\int_{|v|\leq \frac{\Delta^\varepsilon}{\Delta}} \cos(x\sinh(v))e\left(\frac{vT}{\pi}\right)g(\Delta v) \dd v + O(T^{-A}).
  \end{equation}
  Furthermore, $H^-(x)\ll (x+T)^{-A}$ unless $x\asymp T$,
   in which case we have $H^{-}(x)\ll T^{1+\varepsilon}$.
\end{lemma}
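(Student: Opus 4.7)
The plan is to mimic the proof of Lemma \ref{lemma:H+}, replacing the Mehler--Sonine-type representation of $J_{2it}(x)$ by its $K$-Bessel analog. Starting from
\[
K_{2it}(x) = \frac{1}{2}\int_{-\infty}^\infty e^{-x\cosh u - 2itu}\,\dd u
\]
and shifting the $u$-contour by $\pm i\pi/2$, I would derive a Fourier-type identity of the schematic shape
\[
\sinh(\pi t)\,K_{2it}(x) = c\int_{-\infty}^\infty \sin(x\sinh v)\, e^{-2itv}\,\dd v,
\]
interpreted as an oscillatory integral for some constant $c$. Substituting this into the definition \eqref{eqn: H} of $H^-(x)$ and swapping the order of the $t$- and $v$-integrations converts the problem into computing the Fourier transform of $t\,h_{T,\Delta}(t)$ against the $\sin$-kernel.

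Since $h_{T,\Delta}$ is a sum of two translated Gaussians, the Fourier transform of $t\,h_{T,\Delta}(t)$ takes the form $\Delta T\,g(\Delta v)\,e(vT/\pi)$ plus its complex conjugate, for some Schwartz function $g$ depending on $T,\Delta$ and satisfying $g^{(j)}(y)\ll_{j,A}(1+|y|)^{-A}$. Symmetrizing in $v\mapsto -v$ then combines these two pieces with the $\sin(x\sinh v)$ kernel into $\cos(x\sinh v)\,e(vT/\pi)$, producing
\[
H^-(x) = \Delta T\int_{\mathbb{R}}\cos(x\sinh v)\,e\!\left(\frac{vT}{\pi}\right)g(\Delta v)\,\dd v + O(T^{-A}),
\]
and Gaussian decay of $g$ allows us to truncate to $|v|\leq \Delta^\varepsilon/\Delta$ at a cost of $O(T^{-A})$, yielding \eqref{eqn: H^-=}.

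For the pointwise bounds, I would write $\cos(x\sinh v) = \tfrac{1}{2}(e^{ix\sinh v}+e^{-ix\sinh v})$, giving two total phases $\Phi_{\pm}(v) = 2Tv\pm x\sinh v$ with derivatives $\Phi_\pm'(v)=2T\pm x\cosh v$. When $x\not\asymp T$, both $|\Phi_\pm'(v)|$ are bounded below by a constant multiple of $x+T$ throughout the short support $|v|\leq \Delta^\varepsilon/\Delta$, so repeated integration by parts yields the arbitrary-power decay $H^-(x)\ll (x+T)^{-A}$. When $x\asymp T$, the trivial bound applied to the integral representation gives $|H^-(x)|\leq \Delta T\cdot (2\Delta^\varepsilon/\Delta)\max|g|\ll T^{1+\varepsilon}$. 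The main technical obstacle I anticipate is rigorously establishing the non-absolutely-convergent Fourier identity for $\sinh(\pi t)K_{2it}(x)$, since the natural contour shift lies along the borderline $|\Im u|=\pi/2$; this is handled exactly as in Young's treatment of the $J$-Bessel analog by shifting to a contour slightly inside the strip, controlling the endpoint contributions at $\Re u\to\pm\infty$, and passing to the limit.
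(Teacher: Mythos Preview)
The paper does not prove this lemma but simply cites Young \cite[Lemma~7.2]{young2014weyl} and Huang \cite[Lemma~4.3]{Huang2021}; your sketch is precisely a reconstruction of Young's argument (contour-shifted Fourier representation of $\sinh(\pi t)K_{2it}(x)$, Gaussian Fourier analysis of $t\,h_{T,\Delta}(t)$, truncation, then repeated integration by parts in the non-resonant range), so the approaches coincide. One cosmetic point: the contour shift actually produces $\sin(x\sinh v)$ rather than $\cos(x\sinh v)$, and the symmetrization $v\mapsto -v$ does not convert one into the other; but since both decompose into the exponentials $e(\pm x\sinh(v)/2\pi)$ that are handled uniformly downstream (cf.\ \eqref{eqn:function_phi_def}), this has no effect on the bounds or on any later use of the lemma.
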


\begin{proof}
  This is  Young~\cite[Lemma 7.2]{young2014weyl}.
  See also Huang \cite[Lemma 4.3]{Huang2021}.
\end{proof}

%\subsection{The Poisson summation formulas}

%We recall the Poisson summation formula over an arithmetic progression.
%\begin{lemma}\label{lemma:Poisson}
%  Let $\beta\in\mathbb{Z}$ and $c\in \mathbb{Z}_{\geq1}$. For a Schwartz function $f:\mathbb{R}\rightarrow \mathbb{C}$, we have
 % \[
 %   \sum_{\substack{n\in\mathbb{Z}\\ n\equiv \beta \bmod{c}}} f(n) = \frac{1}{c} \sum_{n\in\mathbb{Z}} \hat{f}\left(\frac{n}{c}\right) % e\left(\frac{n\beta}{c}\right),
 % \]
 % where $\hat{f}(y)=\int_{\mathbb{R}} f(x) e(-xy)\dd x$ is the Fourier transform of $f$.
%\end{lemma}

%\begin{proof}
 % See e.g. Iwaniec--Kowalski \cite[Eq. (4.24)]{IwaniecKowalski2004analytic}
%\end{proof}

\subsection{Large sieve inequalities}
\begin{lemma}\label{lemma:Largesieve}
Suppose $U\geq 1$, and let $\{a_n\}_{n\geq 1}$ be a complex sequence. Then
    \[
        \int_{|u|\leq U}\left|\sum_{n\leq N}a_{n}n^{iu}\right|^2\dd u\ll (U+N)\sum_{n\leq N}|a_n|^2.
        \]
\end{lemma}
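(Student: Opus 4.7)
The plan is to expand the modulus square and separate diagonal from off-diagonal contributions. Opening the square,
\begin{equation*}
\int_{|u|\leq U}\Big|\sum_{n\leq N}a_n n^{iu}\Big|^2\dd u = \sum_{m,n\leq N}a_m\overline{a_n}\int_{-U}^{U}\Big(\frac{n}{m}\Big)^{iu}\dd u,
\end{equation*}
so the diagonal terms $m=n$ contribute exactly $2U\sum_{n\leq N}|a_n|^2$, which already accounts for the $U\sum|a_n|^2$ part of the claimed bound.

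For the off-diagonal $m\neq n$, the inner integral equals $\frac{2\sin(U\log(n/m))}{\log(n/m)}$ and is therefore majorized in absolute value by $\frac{2}{|\log(n/m)|}$. Thus the off-diagonal contribution is controlled by
\begin{equation*}
\Big|\sum_{\substack{m,n\leq N\\ m\neq n}}\frac{a_m\overline{a_n}}{\log(n/m)}\Big|.
\end{equation*}
To bound this I would invoke the Montgomery--Vaughan inequality with the nodes $\lambda_n=\log n$: the minimal gap is
\begin{equation*}
\delta=\min_{\substack{m,n\leq N\\m\neq n}}|\log m-\log n|=\log\frac{N}{N-1}\asymp \frac{1}{N},
\end{equation*}
so Montgomery--Vaughan yields a bound of the shape $\pi\delta^{-1}\sum|a_n|^2\ll N\sum|a_n|^2$. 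Adding the diagonal and off-diagonal contributions gives the claimed $(U+N)\sum|a_n|^2$.

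The only real delicate point is the sharpness of the off-diagonal bound: a direct Cauchy--Schwarz on the double sum would lose a $\log N$ factor, so it is essential to use a Hilbert-type inequality (Montgomery--Vaughan, or equivalently a Beurling--Selberg extremal argument) to get the linear-in-$N$ saving. Everything else, including the reduction to the Dirichlet kernel $\frac{\sin(U\log(n/m))}{\log(n/m)}$, is routine. An alternative route would be Gallagher's lemma applied to $F(u)=\sum_{n\leq N}a_n n^{iu}$, which bypasses Hilbert's inequality at the cost of going through a smoothed $L^2$ estimate, but the Montgomery--Vaughan approach seems cleaner here.
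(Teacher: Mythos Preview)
The paper does not give a self-contained argument here; it simply cites Gallagher. Your approach via the Montgomery--Vaughan generalised Hilbert inequality is a standard alternative and does yield the bound, but one step is not correct as written. From the pointwise estimate
\[
\Big|\frac{2\sin(U\log(n/m))}{\log(n/m)}\Big|\le \frac{2}{|\log(n/m)|}
\]
you cannot deduce that the off-diagonal contribution is controlled by $\bigl|\sum_{m\neq n}a_m\overline{a_n}/\log(n/m)\bigr|$; the triangle inequality would instead give the sum with $|a_m a_n|/|\log(n/m)|$, and the Hilbert inequality does not apply to this positive kernel (the antisymmetry of $1/(\lambda_m-\lambda_n)$ is what produces the cancellation). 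The fix is immediate: write $2\sin(U\theta)/\theta=(e^{iU\theta}-e^{-iU\theta})/(i\theta)$ with $\theta=\log m-\log n$, so the off-diagonal splits into two bilinear forms of the shape
\[
\sum_{m\neq n}\frac{b_m\overline{b_n}}{\log m-\log n},\qquad b_n=a_n n^{\pm iU},
\]
and the Montgomery--Vaughan inequality applied to each gives $\ll \delta^{-1}\sum|a_n|^2\ll N\sum|a_n|^2$, exactly as you claimed. Gallagher's original route is different (a Sobolev-type lemma bounding $\int_I|F|^2$ in terms of $\int_{\mathbb R}|F|^2$ and $\int_{\mathbb R}|FF'|$, applied with Plancherel), but both arguments are standard; the Montgomery--Vaughan version has the advantage of a sharper implied constant.
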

\begin{proof}
See \cite{Gallager1970}.
\end{proof}
\begin{lemma}\label{lemma:Largesieve2}
Suppose $U\geq 1$, and let $\{a_{m,n}\}_{m,n\geq 1}$ be a two dimensional complex array which is supported on co-prime lattice, i.e. $a_{m,n}=0$ for $(m,n)\neq 1$. Then
    \[
        \int_{|u|\leq U}\left|\sum_{n\leq N}\sum_{m\leq M}a_{m,n}\left(\frac{m}{n}\right)^{iu}\right|^2\dd u\ll (MN)^\varepsilon(U+MN)\sum_{m\leq M}\sum_{n\leq N}|a_{m,n}|^2,
    \]
where the implied constant only depends on $\varepsilon$.
\end{lemma}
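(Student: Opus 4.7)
\emph{Proof plan.} The plan is to view the inner sum as an exponential sum over real frequencies and apply the classical large sieve inequality of Montgomery--Vaughan for well-separated frequencies: if $\{x_r\}$ is a $\delta$-separated set of real numbers, then
\[
\int_{-U}^{U}\biggl|\sum_{r} c_r e^{iux_r}\biggr|^2\,\dd u\ll (U+\delta^{-1})\sum_{r} |c_r|^2.
\]
Lemma~\ref{lemma:Largesieve} is itself the special case $x_n=\log n$, for which $\delta\gg 1/N$. For our problem the natural frequencies are $x_{m,n}:=\log(m/n)$, and the whole task is to verify the separation $\delta\gg 1/(MN)$ on the coprime support of $a_{m,n}$.

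First I would observe that on the coprime support the map $(m,n)\mapsto m/n$ is injective, since every positive rational admits a unique reduced representation. Second, for two distinct coprime pairs $(m_1,n_1)\neq(m_2,n_2)$ with $m_i\leq M$, $n_i\leq N$, I would compute
\[
\left|\log\frac{m_1}{n_1}-\log\frac{m_2}{n_2}\right|=\left|\log\frac{m_1n_2}{m_2n_1}\right|\geq \frac{1}{2\max(m_1n_2,m_2n_1)}\geq \frac{1}{2MN},
\]
using that $m_1n_2$ and $m_2n_1$ are distinct positive integers (so $|m_1n_2-m_2n_1|\geq 1$) together with the elementary inequality $|\log(1+t)|\geq |t|/2$ valid for $|t|\leq 1$. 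With this separation in hand, Montgomery--Vaughan immediately yields
\[
\int_{|u|\leq U}\biggl|\sum_{m\leq M}\sum_{n\leq N}a_{m,n}(m/n)^{iu}\biggr|^2\,\dd u\ll (U+MN)\sum_{m\leq M}\sum_{n\leq N}|a_{m,n}|^2,
\]
which is slightly sharper than the claimed bound; the $(MN)^\varepsilon$ factor is absorbed.

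There is really no serious obstacle; the entire argument is a direct appeal to a standard well-spaced large sieve after an elementary separation estimate. The only conceptual point worth emphasizing is where the coprime hypothesis enters: it is precisely what guarantees that distinct lattice points $(m,n)$ yield distinct frequencies $\log(m/n)$ and that the minimum gap is as large as $1/(2MN)$. Without the coprime restriction the frequencies would collapse and the inequality would fail in the sharp form claimed, in agreement with the essentially-best-possible character of the bound noted in the introduction. If one prefers not to invoke Montgomery--Vaughan directly and instead stay within the framework of Lemma~\ref{lemma:Largesieve}, a dyadic decomposition by the size of $m/n$ combined with Cauchy--Schwarz would produce the result with a genuine $(MN)^\varepsilon$ loss, which matches the statement exactly.
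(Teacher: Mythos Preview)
Your argument is correct and in fact yields the inequality without the $(MN)^\varepsilon$ factor. The separation estimate is fine: if $m_1n_2>m_2n_1$ and $m_1n_2\le 2m_2n_1$ then $\log(m_1n_2/m_2n_1)\ge (m_1n_2-m_2n_1)/(2m_2n_1)\ge 1/(2MN)$, while if $m_1n_2>2m_2n_1$ the logarithm exceeds $\log 2$; either way the frequencies $\log(m/n)$ on the coprime support are $(2MN)^{-1}$-separated and Montgomery--Vaughan applies directly.

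This is a genuinely different route from the paper. The paper does not invoke a well-spaced large sieve as a black box; instead it majorizes the indicator of $[-U,U]$ by a smooth cutoff $f$ of width $U+Y$, expands the square, and analyzes the resulting kernel $F(x)=\int f(u)x^{iu}\dd u$. The diagonal $m_1n_2=m_2n_1$ is where coprimality is used (forcing $(m_1,n_1)=(m_2,n_2)$), and the off-diagonal is controlled by repeated integration by parts, $F(x)\ll Y^{1-j}|(1+x)/(1-x)|^j$, followed by a count of solutions to $m_1n_2-m_2n_1=k$. Choosing $Y=(MN)^{1+\varepsilon}$ and $j\approx \varepsilon^{-1}$ balances the terms and produces the $(MN)^\varepsilon$ loss. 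Your approach is shorter and sharper because Montgomery--Vaughan already packages this balancing optimally; the paper's approach is more self-contained and makes the role of the off-diagonal spacing explicit at the cost of the $\varepsilon$.
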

\begin{proof}Let $Y\geq 1$ and consider the weight function $f$ satisfies that $f$ supports on the interval $(-Y-U, U+Y)$, $f(u)\equiv 1$ on the segment $|u|\leq U$ and $f^{(j)}(u)\ll_j \frac{1}{Y^j}$ for any positive integer $j$.
The left hand side is less than
\[
    \int_{-\infty}^{\infty}f(u)\left|\sum_{n\leq N}\sum_{m\leq M}a_{m,n}\left(\frac{m}{n}\right)^{iu}\right|^2\dd u\\
    =\sum_{m_1\leq M}\sum_{n_1\leq N}\sum_{m_2\leq M}\sum_{n_2\leq N}a_{m_1, n_1}\bar{a}_{m_2,n_2}F\left(\frac{m_1n_2}{m_2n_1}\right)
\]
where $F(x)=\int_{-\infty}^{\infty}f(u)x^{iu}\dd u$.
Note that
\[
    F(1)=2U+O(Y),
\]
and by integration by parts,
\[
F(x)\ll_j \frac{Y}{(Y|\log x|)^j}\ll \frac{1}{Y^{j-1}}\left|\frac{1+x}{1-x}\right|^j,
\] if $x\neq 1$. Here we use the fact that $|\frac{1-x}{1+x}|\ll |\log x|.$ Indeed, if $x$ is close to 1, it can be obtained by taking Taylor expansion of $\log x$, otherwise,
$|\frac{1-x}{1+x}|$ is bounded.\par
For the diagonal term, that is $m_1n_2=m_2n_1$.
Due to $(m_1,n_1)=(m_2,n_2)=1$, we get $m_1=m_2, n_1=n_2$.
Therefore, the diagonal term
\[
    \ll (U+Y)\sum_{m\leq M}\sum_{n\leq N}|a_{m,n}|^2.
\]
The off-diagonal term contributes
 \begin{equation*}
    \begin{split}
        &  \mathop{\sum\sum\sum\sum}_{n_1, n_2, m_1, m_2\atop m_1n_2\neq m_2n_1 }|a_{m_1, n_1}{a_{m_2,n_2}}|\frac{1}{Y^{j-1}}\left|\frac{m_1n_2+m_2n_1}{m_1n_2-m_2n_1}\right|^j\\
        &  \hskip 20pt\leq 2\frac{(MN)^j}{Y^{j-1}}\sum_{m_1\leq M}\sum_{n_1\leq N}|a_{m_1, n_1}|^2
        \mathop{\sum\sum}_{n_2\leq N, m_2\leq M\atop m_1n_2\neq m_2n_1 }\frac{1}{|m_1n_2-m_2n_1|^j}
    \end{split}
  \end{equation*}
By using inequality $ab \leq a^2+b^2$ ,
and the fact that fixing $m_1, n_1$ and integer $k\geq 1$,
the number of solution $(m_2, n_2)$ satisfying the equation $m_1n_2-m_2n_1=k$ is less than $ \min\{M, N\}\leq 2(MN)^\frac{1}{2}$,
let $j\geq 2$, the above sum is bounded by
\begin{equation*}
    2\frac{(MN)^j}{Y^{j-1}}\sum_{m_1\leq M}\sum_{n_1\leq N}|a_{m_1, n_1}|^2\sum_{k\geq 1}\frac{2(MN)^\frac{1}{2}}{k^j}
    \ll\sum_{m\leq M}\sum_{n\leq N}|a_{m, n}|^2\frac{(MN)^{j+\frac{1}{2}}}{Y^{j-1}}.
\end{equation*}
Combining the diagonal contribution and the off-diagonal contribution and taking $Y=(MN)^{1+\varepsilon},$ $j=2+\left[\frac{1}{2\varepsilon}\right]$, this completes the proof of the lemma.
\end{proof}

\subsection{Oscillatory integrals}
Let $\mathcal{F}$ be an index set and $X=X_T: \mathcal{F} \rightarrow \mathbb{R}_{\geq 1}$ be a function of $T \in \mathcal{F}$.
 A family $\left\{w_T\right\}_{T \in \mathcal{F}}$ of smooth functions supported on a product of dyadic intervals in $\mathbb{R}_{>0}^d$ is called $X$-inert if for each $j=\left(j_1, \cdots, j_d\right) \in \mathbb{Z}_{\geq 0}^d$ we have
\begin{equation}\label{eqn: def_inert}
    C_{\mathcal{F}}\left(j_1, \cdots, j_d\right):=\sup _{T \in \mathcal{F}} \sup _{\left(x_1, \cdots, x_d\right) \in \mathbb{R}_{>0}^d} X_T^{-j_1-\cdots-j_d}\left|x_1^{j_1} \cdots x_d^{j_d} w_T^{\left(j_1, \cdots, j_d\right)}\left(x_1, \ldots, x_d\right)\right|<\infty .
\end{equation}
We will use the following lemmas several times.
\begin{lemma}\label{lemma: spl_1}
 Suppose that $w=w_T(t)$ is a family of $X$ inert functions, with compact support on $[Z, 2 Z]$, so that for all $j=0,1, \ldots$ we have the bound $w^{(j)}(t) \ll(Z / X)^{-j}$. Also suppose that $\phi$ is smooth and satisfies, for $j=2,3, \cdots$, $\phi^{(j)}(t) \ll \frac{Y}{Z^j}$ for some $R \geq 1$ with $Y / X \geq R$ and all $t$ in the support of $w$. Let
    \begin{equation}
    I=\int_{-\infty}^{\infty} w(t) e^{i \phi(t)} \dd t.
    \end{equation}
    If $\left|\phi^{\prime}(t)\right| \gg \frac{Y}{Z}$ for all $t$ in the support of $w$, then $I \ll_A Z R^{-A}$ for $A$ arbitrarily large.
\end{lemma}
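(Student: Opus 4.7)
The plan is to prove this standard non-stationary-phase bound by iterated integration by parts. Since the hypothesis $|\phi'(t)| \gg Y/Z$ rules out any stationary points on the support of $w$, each integration by parts should save a factor of $(Z/Y)\cdot(X/Z) = X/Y \leq 1/R$; performing $A$ such steps then produces the required saving $R^{-A}$, while the trivial bound of $Z$ coming from the length of the support remains.

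Concretely, I would introduce the differential operator
\begin{equation*}
    \mathcal{L}f(t) := \frac{d}{dt}\!\left(\frac{f(t)}{i\phi'(t)}\right).
\end{equation*}
Since $w$ has compact support, one integration by parts gives $I = -\int e^{i\phi(t)}(\mathcal{L}w)(t)\,\dd t$, and iterating $A$ times yields $I = (-1)^A\int e^{i\phi(t)}(\mathcal{L}^A w)(t)\,\dd t$. The key inductive claim is that $\mathcal{L}^k w$ is a finite linear combination, with absolute constants depending only on $k$, of functions of the form $g(t)\prod_{j}\phi^{(b_j)}(t)/\phi'(t)^{a}$ with $a\geq k$, $b_j\geq 2$, and $g$ an $X$-inert function supported on $[Z,2Z]$. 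Using the hypotheses $g^{(j)} \ll (Z/X)^{-j}$, $|\phi^{(j)}| \ll Y/Z^{j}$ for $j\geq 2$, and $|\phi'| \gg Y/Z$, a direct induction via the Leibniz rule shows each such summand is bounded pointwise by $(X/Y)^k \leq R^{-k}$. Taking $k=A$ and estimating trivially on the support of length $Z$ then yields $I \ll_A Z\cdot R^{-A}$.

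The only real obstacle is the combinatorial bookkeeping of the iterated derivatives: one must verify that applying $\mathcal{L}$ once more to a term of the displayed form—whether the new derivative lands on $g$, on some $\phi^{(b_j)}$ in the numerator, or on a $\phi'$ in the denominator—still produces a sum of terms of the same shape, with the exponent $a$ increased by one and the total pointwise bound multiplied by $X/Y$. Each of the three cases introduces an extra factor of $1/Z$ which is compensated by either $Z/X$ (from inertness of $g$), $Y/Z$ (from $\phi^{(b_j+1)}$), or $Z/Y$ (from differentiating $1/\phi'$). Tracking these balances is how the uniform saving of $X/Y\leq 1/R$ per step is extracted, and it is the only nontrivial content of the argument.
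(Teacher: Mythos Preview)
Your proposal is correct and is precisely the standard non-stationary-phase argument; the paper itself does not supply a proof but simply cites \cite{BKY13} and \cite{KPY19}, where exactly this iterated integration-by-parts scheme is carried out. One small point of bookkeeping: after differentiating, the new amplitude is not literally $X$-inert but rather $w^{(\ell)}$ times a rational expression in the $\phi^{(b_j)}$, so it is cleaner to track terms of the shape $w^{(\ell)}\prod_i \phi^{(b_i)}/(\phi')^{a}$ and bound each directly using $|w^{(\ell)}|\ll (X/Z)^{\ell}$, $|\phi^{(b_i)}|\ll Y/Z^{b_i}$, $|\phi'|\gg Y/Z$; your three-case analysis then goes through verbatim and yields the saving $X/Y\le R^{-1}$ per step (using $X\ge 1$ for the cases where the derivative lands on a $\phi^{(b_i)}$ or on a power of $\phi'$).
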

\begin{proof}
    See \cite{BKY13} and \cite{KPY19}.
\end{proof}
\begin{lemma}\label{lemma: spl_2}
  Suppose $w_T$ is $X$-inert in $t_1, \cdots, t_d$, supported on $t_1 \asymp Z$ and $t_i \asymp X_i$ for $i=2, \cdots, d$. Suppose that on the support of $w_T, \phi=\phi_T$ satisfies
\[
\frac{\partial^{a_1+a_2+\cdots+a_d}}{\partial t_1^{a_1} \cdots \partial t_d^{a_d}} \phi\left(t_1, t_2, \cdots, t_d\right) \ll_{C_{\mathcal{F}}} \frac{Y}{Z^{a_1}} \frac{1}{X_2^{a_2} \cdots X_d^{a_d}},
\]
for all $a_1, \cdots, a_d \in \mathbb{N}$ with $a_1 \geq 1$. Suppose $\phi^{\prime \prime}\left(t_1, t_2, \ldots, t_d\right) \gg \frac{Y}{Z^2}$ (here and later, $\phi^{\prime}$ and $\phi^{\prime \prime}$ denote the derivative with respect to $\left.t_1\right)$, for all $t_1, t_2, \cdots, t_d$ in the support of $w_T$, and for each $t_2, \cdots, t_d$ in the support of $\phi$ there exists $t_0 \asymp Z$ such that $\phi^{\prime}\left(t_0, t_2, \ldots, t_d\right)=0$. Suppose that $Y / X^2 \geq R$ for some $R \geq 1$. Then
\begin{equation}\label{eqn: spm}
I=\int_{\mathbb{R}} e^{i \phi\left(t_1, \cdots, t_d\right)} w_T\left(t_1, \cdots, t_d\right) d t_1=\frac{Z}{\sqrt{Y}} e^{i \phi\left(t_0, t_2, \cdots, t_d\right)} W_T\left(t_2, \cdots, t_d\right)+O_A\left(Z R^{-A}\right),
\end{equation}
for some $X$-inert family of functions $W_T$, and where $A>0$ may be taken to be arbitrarily large. The implied constant in equation \eqref{eqn: spm} depends only on $A$ and on $C_{\mathcal{F}}$ defined in formula \eqref{eqn: def_inert}.
\begin{proof}
    See \cite{BKY13} and \cite{KPY19}.
\end{proof}
\end{lemma}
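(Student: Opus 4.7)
The plan is to prove this multidimensional stationary phase formula by a localize-then-Taylor-expand argument in the $t_1$-variable, with $t_2,\ldots,t_d$ treated as smoothly varying parameters. Everything reduces to a one-variable stationary phase analysis supplemented by uniform control on the way the output depends on $(t_2,\ldots,t_d)$, the latter being exactly what delivers $X$-inertness of $W_T$.

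First I would split $I$ with a smooth partition of unity around the stationary point $t_0=t_0(t_2,\ldots,t_d)$. Choose a bump $\rho$ equal to $1$ on $|s|\leq 1/2$ and vanishing outside $|s|\leq 1$, and for a localization scale $\eta$ (of the form $R^{-1/2+\varepsilon}$) write $I=I_{\text{near}}+I_{\text{far}}$ by inserting $\rho((t_1-t_0)/(\eta Z))$ and its complement. On the support of $I_{\text{far}}$, the hypothesis $\phi''\gg Y/Z^2$ together with $\phi'(t_0,\ldots)=0$ forces $|\partial_{t_1}\phi|\gg \eta Y/Z$, while the higher derivatives of $\phi$ still obey the given $Y/Z^a$ bounds. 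Lemma~\ref{lemma: spl_1} applied with the corresponding inertness scales then gives $I_{\text{far}}\ll_A ZR^{-A}$.

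For $I_{\text{near}}$, substitute $t_1=t_0+s\sqrt{Z^2/Y}$; the Jacobian contributes the expected factor $Z/\sqrt{Y}$. Since $\phi'(t_0,\ldots)=0$, Taylor expansion yields
\[
\phi(t_1,\ldots)=\phi(t_0,\ldots)+\tfrac{1}{2}a(t_2,\ldots,t_d)\,s^2+\Psi(s,t_2,\ldots,t_d),
\]
with $a=\phi''(t_0)Z^2/Y\asymp 1$ and, because $\phi^{(k)}(t_0,\ldots)\ll Y/Z^k$, the cubic-or-higher remainder $\Psi$ an $X$-inert function of size $\ll R^{-1/2+O(\varepsilon)}$ on the cutoff. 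Factoring $e^{i\phi(t_0)}$ out and expanding $e^{i\Psi}=\sum_{j\leq J}(i\Psi)^j/j!$ to a sufficiently large order $J$, each resulting integrand is a polynomial times $e^{ias^2/2}$ times an $X$-inert amplitude. The classical Fresnel evaluation of $\int e^{ias^2/2}s^k w(s)\,ds$ (or, equivalently, a contour shift argument after completing the square) then produces the claimed leading term $\frac{Z}{\sqrt{Y}}e^{i\phi(t_0,t_2,\ldots,t_d)}W_T(t_2,\ldots,t_d)$, with the remainder comfortably inside $O_A(ZR^{-A})$.

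The main obstacle, and the step requiring the most care, is verifying that $W_T$ is genuinely $X$-inert in $(t_2,\ldots,t_d)$ in the sense of \eqref{eqn: def_inert}. For this I would apply the implicit function theorem to $\partial_{t_1}\phi(t_0,t_2,\ldots,t_d)=0$ and differentiate recursively in $t_2,\ldots,t_d$; combined with the mixed-derivative hypothesis on $\phi$, this yields
\[
\frac{\partial^{a_2+\cdots+a_d}t_0}{\partial t_2^{a_2}\cdots\partial t_d^{a_d}}\ll \frac{Z}{X_2^{a_2}\cdots X_d^{a_d}},
\]
which is exactly the scale needed to match the inert definition. One then propagates these bounds through $w_T(t_0,\ldots)$, through the Taylor coefficients $\phi^{(k)}(t_0,\ldots)$ and $\phi''(t_0,\ldots)^{-1/2}$, and through the localizing bump. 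The bookkeeping is tedious but each chain-rule step introduces only a $1/X_i$ loss in the $t_i$-derivative, exactly matching \eqref{eqn: def_inert}. The error $O_A(ZR^{-A})$ absorbs $I_{\text{far}}$, the tail of the $e^{i\Psi}$ Taylor expansion, and the truncation of the Fresnel integral beyond the localized region.
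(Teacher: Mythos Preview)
The paper does not give a proof of this lemma; it simply cites \cite{BKY13} and \cite{KPY19}. Your sketch is a faithful outline of the standard argument carried out in those references: localize around the (parameter-dependent) stationary point, dispose of the far piece by nonstationary phase, rescale and Taylor expand the phase on the near piece, evaluate the resulting Fresnel-type integrals, and propagate inertness to $W_T$ via the implicit function theorem bounds on $t_0(t_2,\ldots,t_d)$. So your approach is the same one the paper defers to.

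A minor technical caution: your chosen localization scale $\eta=R^{-1/2+\varepsilon}$ is not quite the one used in \cite{KPY19}, and with this $\eta$ the verification of the hypotheses of Lemma~\ref{lemma: spl_1} on the far piece is a bit delicate (the cutoff $\rho((t_1-t_0)/(\eta Z))$ degrades the inertness parameter of the amplitude to $\max(X,\eta^{-1})$, and one then has to balance this against the gain $|\phi'|\gg \eta Y/Z$). In the cited references this is handled either by a slightly different choice of scale or, more cleanly, by a Morse-lemma change of variables that makes the phase exactly quadratic before truncating. Likewise, expanding $e^{i\Psi}$ as a Taylor polynomial works but requires $\Psi$ to be uniformly small on the whole near window, which again constrains $\eta$. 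None of this is a genuine gap---the strategy is correct---but if you were to write out the details you would want to follow the parameter bookkeeping in \cite{KPY19} rather than improvise the scales.
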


\section{Applying the Kuznetsov trace formula }\label{sec:appyingKTT}
Applying the approximate functional equations (Lemmas \ref{lemma:AFE2} and \ref{lemma:AFE3}), we have
\begin{equation}
 \begin{split}
  \mathcal M &:= \sideset{}{'}\sum_{j\geq1}
  w_j h_{T,\Delta}(t_j)
   L(1/2,u_j) L(1/2,\sym^2 u_j) \\
   &\hskip 100pt +\frac{1}{4\pi}\int_{-\infty}^{\infty}h_{T,\Delta}(t)w(t)\zeta(1/2)|\zeta(1/2+it)\zeta(1/2+2it)|^2\dd t\\
   & =
  4\sum_{m\geq1} \frac{1}{m^{1/2}}
  \sum_{n\geq1} \frac{1}{n^{1/2}}
  \left(\sideset{}{'}\sum_{j\geq1}
  w_j h_{T,\Delta}(t_j)
  W\left(m,t_j\right) V\left(n,t_j\right)
  \lambda_j(m) \lambda_j(n^2)\right.\\
  &\hskip 110pt \left.+\frac{1}{4\pi}\int_{-\infty}^{\infty}h_{T,\Delta}(t)w(t)W\left(m,t\right) V\left(n,t\right)
  \eta_t(m) \eta_t(n^2)\dd t\right).
  \end{split}
\end{equation}

Applying the Kuznetsov trace formula (Lemma \ref{lemma:KTF}), we get
\begin{equation}
  \mathcal {M} = \mathcal{ D } + \sum_{\pm}\mathcal{R}^\pm,
\end{equation}
where
\begin{equation}\label{eqn:D}
  \mathcal{ D }:= \frac{4}{\pi}
  \sum_{n\geq1} \frac{1}{n^{\frac{3}{2}}} \int_{0}^{\infty} h_{T,\Delta}(t)
  W\left(n^2,t\right) V\left(n,t\right) \tanh(\pi t)t \dd t,
\end{equation}
and
\begin{equation}\label{eqn:R}
    \mathcal{R}^\pm :=
    2 \sum_{m\geq1} \frac{1}{m^{1/2}}
  \sum_{n\geq1} \frac{1}{n^{1/2}} \sum_{c\geq1}\frac{S(n^2,\pm m; c)}{c} \widetilde{H}^{\pm}\left(\frac{4\pi\sqrt{m}n}{c}\right),
\end{equation}
\begin{equation}\label{eqn:widetilde{H}+=}
       \widetilde{H}^+(x) = 2i \int_{-\infty}^{\infty} J_{2it}(x)\frac{h_{T,\Delta}(t)
   W\left(m,t\right) V\left(n,t\right)t}{\cosh(\pi t)} \dd t,
\end{equation}
\begin{equation}\label{eqn:widetilde{H}-=}
       \widetilde{H}^-(x) = \frac{4}{\pi} \int_{-\infty}^{\infty} K_{2it}(x)\sinh(\pi t) h_{T,\Delta}(t)
   W\left(m,t\right) V\left(n,t\right)t \dd t.
\end{equation}
By the Cauchy--Schwarz inequality,  the contribution of continuous spectrum is bounded by
\begin{equation*}
    T^\varepsilon\left(\int_{t\asymp T}|\zeta(1/2+it)|^4\dd t\right)^{\frac{1}{2}}
    \left(\int_{t\asymp T}|\zeta(1/2+2it)|^4\dd t\right)^{\frac{1}{2}}
    \ll T^{1+\varepsilon}
\end{equation*}
Therefore, we have
\begin{equation}\label{eqn: moment=D+R}
     \sideset{}{'}\sum_{j\geq1}
     w_j h_{T,\Delta}(t_j)
     (1/2,u_j) L(1/2,\sym^2 u_j)= \mathcal{ D } + \sum_{\pm}\mathcal{R}^\pm+O(T^{1+\varepsilon})
\end{equation}

\section{The diagonal term}\label{sec:diagonal}
By Lemmas \ref{lemma:AFE2} and \ref{lemma:AFE3}, we write
\begin{equation}\label{eqn:D=int}
  \mathcal{ D } = \frac{4}{\pi} \int_{0}^{\infty} h_{T,\Delta}(t)
  \mathcal{P}(t) \tanh(\pi t)t \dd t,
\end{equation}
where
\begin{multline}\label{eqn:P=int}
      \mathcal{P}(t) =\frac{1}{(2\pi i)^2} \int_{(2)}\int_{(2)}\frac{\prod_{\pm}\Gamma_\mathbb{R}(1/2+s_1\pm it)}{\prod_{\pm}\Gamma_\mathbb{R}(1/2\pm it)^2}\frac{\Gamma_\mathbb{R}(1/2+s_2)}{\Gamma_\mathbb{R}(1/2)}\frac{\prod_{\pm}\Gamma_\mathbb{R}(1/2+s_2\pm 2it)}{\prod_{\pm}\Gamma_\mathbb{R}(1/2\pm 2it)^2}\\
      \times\zeta(1+2s_2)\zeta(3/2+2s_1+s_2)G(s_1)G(s_2) \frac{\dd s_1}{s_1}\frac{\dd s_2}{s_2}.
\end{multline}
Here the zeta function $\zeta(3/2+2s_1+s_2)$ comes from the $n$-sum in \eqref{eqn:D}. By Stirling's formula, we truncate the line integral and get
\begin{multline}\label{eqn:P=intasym}
      \mathcal{P}(t) =\frac{1}{(2\pi i)^2} \int_{\varepsilon-i(\log T)^2}^{\varepsilon+i(\log T)^2}\int_{\varepsilon-i(\log T)^2}^{\varepsilon+i(\log T)^2}\frac{1}{2^{s_1}}\left(\frac{t}{\pi}\right)^{s_1+s_2}\frac{\Gamma_\mathbb{R}(1/2+s_2)}{\Gamma_\mathbb{R}(1/2)}
    \\
    \times \zeta(1+2s_2)\zeta(3/2+2s_1+s_2)G(s_1)G(s_2) \frac{\dd s_1}{s_1}\frac{\dd s_2}{s_2}+O(T^{-2+\varepsilon}).
\end{multline}
Due to the weight function $h(t)$,
it suffices to consider the asymptotic behaviour of $\mathcal{P}(t)$ for $t\asymp T$.
We shift the $s_1$-integral to  $\Re{s_1}=-1/4$, crossing a simple pole at $s_1=0$, the shifted integral and the horizonal integral has size of $O(T^{-\frac{1}{4}+\varepsilon})$.
The residue is
\[
    \frac{1}{2\pi i} \int_{\varepsilon-i(\log T)^2}^{\varepsilon+i(\log T)^2}\left(\frac{t}{\pi}\right)^{s_2}\frac{\Gamma_\mathbb{R}(1/2+s_2)}{\Gamma_\mathbb{R}(1/2)}\zeta(1+2s_2)\zeta(3/2+s_2)G(s_2)\frac{\dd s_2}{s_2}
\]
We move the line of $s_2$-integral to  $\Re{s_2}=-1/4$, crossing the pole at $s_2=0$ of order $2$.
The shifted integral and the horizonal integral are bounded by $O(T^{-\frac{1}{4}+\varepsilon})$.
Using expansion $\zeta(1+z)=\frac{1}{z}+\gamma+c_1z+\cdots$, from the residue,
we get
\begin{equation}\label{eqn:P=logt}
    \mathcal{P}(t)=\frac{1}{2}\zeta\left(\frac{3}{2}\right)\log t+\left(\zeta(\frac{3}{2})\left(-\frac{3}{4}\log \pi +\gamma+\frac{1}{4}\psi(\frac{1}{4})\right)+\frac{1}{2}\zeta'(\frac{3}{2})\right)+O(T^{-\frac{1}{4}+\varepsilon}),
\end{equation}
where $\gamma$ is the Euler constant.
Finally, evaluating the $t$-integral through (\ref{eqn:P=logt}), using $\tanh(\pi t)=1+O(e^{-2\pi |t|})$, we obtain
\begin{multline}\label{eqn:Diagonal_asymp}
      \mathcal{D} =\frac{2}{\pi}\zeta\left(\frac{3}{2}\right)H_{T,\Delta}^{\log}\\+
      \frac{4}{\pi}\left(\zeta(\frac{3}{2})\left(\frac{3\gamma}{4}-\frac{3}{4}\log \pi -\frac{3\log 2}{4}-\frac{\pi}{8}\right)+\frac{1}{2}\zeta'(\frac{3}{2})\right)H_{T,\Delta}+O(\Delta T^{\frac{3}{4}+\varepsilon}),
\end{multline}
where
  \[
        H_{T,\Delta}^{\log}=\int_{0}^{\infty}e^{-\frac{(t-T)^2}{\Delta^2}}t\log t\,\dd t,
  \]
and
    \[
     H_{T,\Delta}=\int_{0}^{\infty}e^{-\frac{(t-T)^2}{\Delta^2}}t\,\dd t,
    \]
\section{The off-diagonal terms}\label{sec:offdiagonal}
Using (\ref{eqn:Wt=WT}) and (\ref{eqn:Vt=VT}), we get that the off-diagonal term becomes
\begin{equation}\label{eqn: R=}
   \mathcal{R}^\pm=\sum_{0\leq l_1, l_2< L}\mathcal{R}_{l_1, l_2}^\pm+O\left((\Delta T^{-1})^{2L}T^{O(1)}\right),
\end{equation}
where
\begin{equation}\label{eqn:R_{l_1, l_2}}
    \mathcal{R}_{l_1, l_2}^\pm :=
    2 \sum_{m\geq1} \frac{W(m, T)}{m^{1/2}}
  \sum_{n\geq1} \frac{V(n, T)}{n^{1/2}} \sum_{c\geq1}\frac{S(n^2,\pm m; c)}{c} H_{l_1, l_2}^{\pm}\left(\frac{4\pi\sqrt{m}n}{c}\right),
\end{equation}
\begin{equation}\label{eqn:H_{l_1, l_2}+=}
       H_{l_1, l_2}^+(x) = 2i \int_{-\infty}^{\infty} J_{2it}(x)\frac{h_{T,\Delta}(t)
       \left(\frac{t^2-T^2}{T^2}\right)^{l_1+l_2}}{\cosh(\pi t)} \dd t,
\end{equation}
\begin{equation}\label{eqn:H_{l_1, l_2}-=}
       H_{l_1, l_2}^-(x) = \frac{4}{\pi} \int_{-\infty}^{\infty} K_{2it}(x)\sinh(\pi t) h_{T,\Delta}(t)
       \left(\frac{t^2-T^2}{T^2}\right)^{l_1+l_2}t \dd t.
\end{equation}
Here $L$ is a large integer. The error term in \eqref{eqn: R=} is depending on $L$ with negative power of $T$. For example, for $\Delta\leq T^{1-\varepsilon}$,
we can take $L=[\varepsilon^{-2}]$, the error term becomes $O(T^{-\frac{2}{\varepsilon}+O(1)})$ which is negligible.

We only consider the case $l_1=l_2=0$ (simply denote $H_{0, 0}^\pm$ by $H^\pm$), since we can save additional power of $T$ in the other cases.
Taking a smooth dyadic partition of unity to the $m,n$-sum, it suffices to bound the sum
\begin{equation}
    \mathcal{S}_{\pm}(M, N; C)=\sum_{m\geq1}
  \sum_{n\geq1} w_M\left(\frac{m}{M}\right) w_N\left(\frac{n}{N}\right) \sum_{c\asymp C}\frac{S(n^2,\pm m;c)}{c} H^{\pm}\left(\frac{4\pi\sqrt{m}n}{c}\right),
\end{equation}
where $M, N\ll T^{1+\varepsilon}$, $C\geq 1$ and
 $w_M(x),w_N(x)$ are smooth weight functions supported on $(\frac{1}{2},3)$, satisfying
 $x^jw_M^{(j)}(x)\ll_j 1$ and $x^jw_N^{(j)}(x)\ll_j \log T$ respectively.

According to the weak bound $H^{\pm}(x)\leq Tx^{3/4}$ and the Weil bound for the Kloosterman sum, we obtain the trivial bound
\begin{equation}
     \mathcal{S}_{\pm}(M, N; C)\ll M^{15/8}N^{7/4}TC^{-1/4+\varepsilon}.
\end{equation}
Thus, we can restrict $C$ to be less than $T^{B}$ for some fixed large $B$, and
\begin{equation}\label{eqn: R_bound}
    \mathcal{R}^{\pm}\ll T^{1+\varepsilon}\sup_{M,N\ll T^\varepsilon, C\ll T^B}\frac{|\mathcal{S}_{\pm}(M, N; C)|}{\sqrt{MN}}+O(T^{-10}).
\end{equation}
From \eqref{eqn: moment=D+R}, \eqref{eqn:Diagonal_asymp} and \eqref{eqn: R_bound}, one can see that Theorem \ref{thm:moment} is reduced to prove the following proposition.
\begin{proposition}\label{proposition:S}
Assume that $C\ll T^B$ for some fixed large $B$ and $M, N\ll T^{1+\varepsilon}$, Then
    \[
         \mathcal{S}_{\pm}(M, N; C)\ll (MN)^\frac{1}{2}T^{1+\varepsilon}.
    \]
\end{proposition}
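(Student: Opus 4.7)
\textbf{Proof proposal for Proposition \ref{proposition:S}.}
The plan is to dualize the sum via Poisson summation in both variables $m$ and $n$, so that the original Kloosterman-type object becomes an arithmetic sum against a smooth dual weight. After opening the Kloosterman sum $S(n^2,\pm m;c)$ and applying Poisson in $m$ mod $c$ and in $n$ mod $c$, one obtains
\[
\mathcal{S}_{\pm}(M,N;C)=\sum_{c\asymp C}\sum_{x_1,x_2\in\mathbb{Z}}W_\pm(x_1,x_2;c)\,G_\pm(x_1,x_2;c),
\]
with $W_\pm$ and $G_\pm$ as in the Overview. I would dispose of the easy regime $|x_1|M/C\leq T^\varepsilon$ and $|x_2|N/C\leq T^\varepsilon$ by crude estimation (using the Weil bound on $G_\pm$ and trivial bounds on $W_\pm$), so the main difficulty lies in the oscillatory ranges $X_1:=|x_1|\asymp CT^\varepsilon/M$ or $X_2:=|x_2|\asymp CT^\varepsilon/N$.

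For the analytic side I would plug in the integral representations from Lemmas \ref{lemma:H+} and \ref{lemma:H-} for $H^\pm$, exchange orders of integration, and apply stationary phase (Lemmas \ref{lemma: spl_1} and \ref{lemma: spl_2}) in each of $t_1\asymp M$, $t_2\asymp N$ and $v$. The stationary point calculation should produce a leading phase $e(\mp x_1x_2^2/(4c))$ of arithmetic type; inserting a Mellin transform $\lambda(u)$ to separate the variables $x_1,x_2,c$ inside the residual amplitude gives
\[
W_\pm(x_1,x_2;c)=\frac{cM\Delta}{x_2}e\!\left(\mp\frac{x_1 x_2^{2}}{4c}\right)\int_{|u|\ll U_\pm}\!\lambda(u)\Big(\frac{x_1x_2^{2}}{4c}\Big)^{iu}du+O(T^{-10}),
\]
with $U_\pm\ll T^{1+\varepsilon}/\Delta$, following the strategy of Young \cite{young2014weyl}.

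For the arithmetic side I would open the Kloosterman sum, carry out the inner sums over $a,b\bmod c$ using the orthogonality of additive characters (the $b$-sum becomes a quadratic Gauss sum since $n^2$ appears as $b^2$), and perform a completion that extracts precisely the phase $e(\pm x_1x_2^2/(4c))$. This cancels against the $W_\pm$ phase, leaving a decomposition
\[
G_\pm(x_1,x_2;c)=e\!\left(\pm\frac{x_1x_2^2}{4c}\right)\sum_{i=1,2}G_i(x_1;c)\,a(x_1)b(x_2),
\]
where the key feature is that $G_i(x_1;c)$ is supported on $(x_1,c)=1$ and, after a second-moment computation over $c\asymp C$ using Weil/Kloosterman bounds, satisfies $\sum_{c\asymp C}|G_i(x_1;c)|^2\ll C^{-2}$.

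Combining the two sides, Cauchy--Schwarz in $c,x_1,u$ followed by the two-dimensional large sieve inequality of Lemma \ref{lemma:Largesieve2}, applied to the coprime-supported array $G_i(x_1;c)a(x_1)$ paired with $b(x_2)$ via the character $(x_1x_2^2/(4c))^{iu}=(x_1/c)^{iu}(x_2)^{2iu}$, should yield the bound
\[
\mathcal{S}_\pm(M,N;C)\ll \Delta\sqrt{MN}\,(U_\pm+X_1C)^{1/2}(U_\pm+X_2)^{1/2}T^\varepsilon.
\]
Plugging in $U_\pm\ll T^{1+\varepsilon}/\Delta$, $X_1\asymp CT^\varepsilon/M$, $X_2\asymp CT^\varepsilon/N$, together with the support restriction $\sqrt{M}N\asymp CT$ coming from the essential range of $H^\pm$, collapses this to $T^{1+\varepsilon}\sqrt{MN}$. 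The main obstacle is the arithmetic step: neither $G_i(x_1;c)$ nor the remaining amplitude allow one to decouple $x_1$ from $c$, so the usual one-variable large sieve is insufficient and the coprime-supported two-dimensional variant (Lemma \ref{lemma:Largesieve2}) is precisely what is needed. Verifying that the decomposition of $G_\pm$ indeed lands on $(x_1,c)=1$ and that the second-moment estimate for $G_i$ over $c$ is uniform in $x_1$ is where I expect the most care to be required.
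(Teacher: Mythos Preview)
Your proposal is essentially the paper's proof: Poisson in $m,n$, stationary phase plus Mellin inversion on $W_\pm$ to extract the phase $e(\mp x_1x_2^2/(4c))$ and a bounded Mellin amplitude of length $U_\pm\ll T^{1+\varepsilon}/\Delta$, the Gauss-sum computation for $G_\pm$ landing on $(x_1,c)=1$, and then Cauchy--Schwarz combined with the one- and two-dimensional large sieve.  Two points deserve sharpening.

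First, the sizes $X_1\asymp CT^\varepsilon/M$, $X_2\asymp CT^\varepsilon/N$ you quote are only the \emph{thresholds} separating the oscillatory and non-oscillatory regimes, not the actual localisation.  The stationary-phase step forces $X_1M\asymp X_2N$ with $x_1x_2>0$, and then the $v$-integration further pins down $X_1X_2^2\asymp \sqrt{M}N$ in the $+$ case (so $X_1\asymp N/\sqrt{M}$, $X_2\asymp\sqrt{M}$) and $X_1X_2^2\ll \sqrt{M}N\Delta^{-3}$ in the $-$ case.  These are strictly larger than your stated values, and it is only with these correct sizes that one can verify $X_1C\ll T^\varepsilon U_\pm$ and $X_2\ll T^\varepsilon U_\pm$; your final numerical check accidentally succeeds because you undercount, but the argument as written would not cover all dyadic $(X_1,X_2)$ in the oscillatory range.

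Second, the support restriction is not uniformly $\sqrt{M}N\asymp CT$.  For $H^-$ this is correct, but for $H^+$ one only has $\sqrt{M}N/C\gg \Delta T^{1-\varepsilon}$ (Lemma~\ref{lemma:H+}), and this distinction is exactly what makes the non-oscillatory $+$ piece vanish (since $x_i\neq0$ forces $\sqrt{M}N/C\ll T^{1+\varepsilon}$, incompatible with $\Delta\geq T^\varepsilon$, while $x_1=x_2=0$ gives $G_+=0$).  Finally, the second-moment bound $\sum_{c\asymp C}|G_i(x_1;c)|^2\ll C^{-2}$ follows from orthogonality of additive characters (each $|G_i|\leq c^{-3/2}$ pointwise after evaluating the Gauss sum), not from Weil's bound.
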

\begin{remark}\label{remark:Pro. holds for H_0}
Define
\begin{equation}\label{eqn:H_0}
    H_0(x)=\Delta T\int_{|v|\leq \frac{\Delta^\varepsilon}{\Delta}} e\left(\frac{x\phi(v)}{2\pi}+ \frac{vT}{\pi}\right)g(\Delta v) \dd v .
\end{equation}
where
\begin{equation}\label{eqn:function_phi_def}
    \phi(v)\in \{\pm \cosh(v), \pm \sinh(v), \pm \cos(v), \pm\sin(v)\}.
\end{equation}
and $g$ is a function satisfying $g^{j}(x)\ll_{j,A}(1+|x|)^{-A}$. Then we can show that Proposition \ref{proposition:S} holds for replacing $H^{\pm}$ by $H_0$. See the discussion of \cite[Proposition 7.3]{young2014weyl} and the proof of Lemma \ref{lemma:I_phi}.
\end{remark}
\par
Applying the Poisson summation formula to $m, n$ modulo $c$, we get
\begin{equation}\label{eqn:S=}
     \mathcal{S}_{\pm}(M, N; C)=\sum_{c\asymp C}\mathop{\sum\sum}_{x_1, x_2\in \mathbb{Z}}W_{\pm}(x_1, x_2, c)G_{\pm}(x_1, x_2, c),
\end{equation}
where
\begin{equation}\label{eqn:W}
     W_{\pm}(x_1, x_2; c )=\int_{0}^{\infty}\int_{0}^{\infty}H^\pm\left( \frac{4\pi \sqrt{t_1}t_2}{c}\right) w_M\left(\frac{t_1}{M}\right) w_N\left(\frac{t_2}{N}\right)e\left(-\frac{x_1t_1}{c}-\frac{x_2t_2}{c}\right) \dd t_1 \dd t_2
\end{equation}
and
\begin{equation}\label{eqn:G=}
     G_\pm(x_1, x_2, c)=\frac{1}{c^3}\sum_{a, b\bmod c}S(\pm a, b^2; c)e\left(\frac{ax_1}{c}+\frac{bx_2}{c}\right).
\end{equation}

We separate the sum (\ref{eqn:S=}) into two parts, say
\begin{equation}\label{eqn:S=S_1+S_2}
     \mathcal{S}_{\pm}(M, N; C)= \mathcal{S}_{\pm}^{'}(M, N, C) +\mathcal{S}_{\pm}^{''}(M, N; C),
\end{equation}
where
\begin{equation}
\mathcal{S}_{\pm}^{'}(M, N; C)=\sum_{c\asymp C}\mathop{\sum\sum}_{x_1, x_2\in \mathbb{Z}\atop |x_1|M/C\geq T^\varepsilon \text{or} |x_2|N/C\geq T^\varepsilon}W_{\pm}(x_1, x_2, c)G_{\pm}(x_1, x_2, c),
\end{equation}
and
\begin{equation}
    \mathcal{S}_{\pm}^{''}(M, N; C)=\sum_{c\asymp C}\mathop{\sum\sum}_{x_1, x_2\in \mathbb{Z}\atop |x_1|M/C< T^\varepsilon, |x_2|N/C< T^\varepsilon}W_{\pm}(x_1, x_2, c)G_{\pm}(x_1, x_2, c).
\end{equation}

\subsection{Analysis of oscillatory integral}
By conjugation of the integral, we have
\begin{equation}\label{eqn:W=barW}
    W_{\pm}(-x_1, -x_2; c )=\overline{W_{\pm}(x_1, x_2; c )}.
\end{equation}
 Change variables $u=t_1\sqrt{t_2}$, $t=t_2$, we have
    \[
       W_{\pm}(x_1, x_2; c )=\int_{0}^{\infty}H^\pm\left( \frac{4\pi \sqrt{u}}{c}\right)\Upsilon_{x_1, x_2, c}(u) \dd u.
    \]
where
\begin{equation}\label{eqn: Upsilon_def}
\Upsilon_{x_1, x_2; c}(u)=\int_{0}^{\infty}w_M\left(\frac{u^2}{Mt}\right) w_N\left(\frac{t}{N}\right)e\left(-\frac{x_1u}{ct^2}-\frac{x_2t}{c}\right) \frac{1}{t^2}\dd t
\end{equation}
The weight functions force $t\asymp N$ and $u\asymp MN^2$.
This $t$-integral can be analyzed by the stationary phase method, and we have the following lemma.
\begin{lemma}\label{lemma:Upsilon}
Let $\Upsilon_{x_1, x_2; c}(u)$ be given by (\ref{eqn: Upsilon_def}), $x_1\asymp X_1$, $x_2\asymp X_2$ and $c\asymp C\geq 1$.
Assume that
  \begin{equation}\label{assump:X_1M/C X_2N/C not small}
        |X_1|M/C\gg T^{\varepsilon} \quad \text{or } \quad |X_2|N/C\gg T^{\varepsilon}
  \end{equation}
Then we have
    \[
        \Upsilon_{x_1, x_2; c}(u)\ll T^{-A},
    \]
unless
    \begin{equation}\label{assump:X_1M=X_2N}
         X_1M\asymp X_2N  \quad \text{and } \quad x_1x_2>0,
    \end{equation}
holds, in which case
\begin{equation}\label{eqn:Upsilon_approxi}
    \Upsilon_{x_1, x_2; c}(u)=\frac{c^{\frac{1}{2}}{x_1}^{\frac{1}{6}}M^{\frac{1}{6}}}{{x_2}^{\frac{2}{3}}N^{\frac{5}{3}}}e\left(-\frac{3\sqrt[3]{2}}{2}\frac{{x_1}^{\frac{1}{3}}{x_2}^{\frac{2}{3}}{u}^{\frac{1}{3}}}{c}\right)w_1(u)+O(T^{-A}),
\end{equation}
for some $1$-inert function $w_1(u)$ which is supported on $u\asymp MN^2$.
\end{lemma}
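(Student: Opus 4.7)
The plan is to treat the $t$-integral defining $\Upsilon_{x_1,x_2;c}(u)$ by the stationary phase method of Lemmas~\ref{lemma: spl_1} and~\ref{lemma: spl_2}. Writing the phase as $\phi(t) = -\frac{x_1 u}{c t^2} - \frac{x_2 t}{c}$, one has
\[
\phi'(t) = \frac{2 x_1 u}{c t^3} - \frac{x_2}{c}, \qquad \phi''(t) = -\frac{6 x_1 u}{c t^4}.
\]
As a function of $t$ alone (with $u, x_1, x_2, c$ frozen) the amplitude is supported on $t\asymp N$ and is $1$-inert at length scale $Z = N$, since each $t$-derivative of $w_M(u/(Mt^2))$ produces a factor $u/(Mt^3)\asymp 1/N$ on the support of $w_N$.

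First I would locate the stationary point. The equation $\phi'(t_0)=0$ gives $t_0^3 = 2x_1 u/x_2$, which is real only when $x_1 x_2 > 0$, and lies in the amplitude support $t_0 \asymp N$ only when $x_1 u/x_2 \asymp N^3$. Because $w_M$ forces $u \asymp M N^2$, this is equivalent to $X_1 M \asymp X_2 N$. Outside this regime the two terms of $\phi'$ have incomparable sizes, so
\[
|\phi'(t)| \;\gg\; \frac{\max(|X_1|M,\,|X_2|N)}{CN} \;=:\; \frac{Y}{Z}, \qquad Y \gg T^{\varepsilon},
\]
by the standing hypothesis on $X_1, X_2$. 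An application of Lemma~\ref{lemma: spl_1} with $X=1$ and $R = T^{\varepsilon}$ then yields $\Upsilon_{x_1,x_2;c}(u) \ll T^{-A}$, covering both the case $x_1 x_2 \le 0$ (no real critical point) and the case $X_1 M \not\asymp X_2 N$.

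In the stationary regime $x_1 x_2 > 0$ and $X_1 M \asymp X_2 N$ I would invoke Lemma~\ref{lemma: spl_2}. A direct evaluation at $t_0 = (2x_1 u/x_2)^{1/3}$ gives
\[
\phi(t_0) \;=\; -\frac{3\cdot 2^{1/3}}{2}\,\frac{x_1^{1/3} x_2^{2/3} u^{1/3}}{c}, \qquad |\phi''(t_0)| \;\asymp\; \frac{|X_1| M}{C N^2},
\]
so with $Y := |X_1|M/C \gg T^{\varepsilon}$ and $Z = N$ the size condition $Y/Z^2 \gg T^{\varepsilon}/N^2$ is met. The lemma outputs a main term of the form $(Z/\sqrt{Y}) \cdot t_0^{-2}\cdot e(\phi(t_0))$ times a $1$-inert factor. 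Simplifying the prefactor and using the compatibility $X_1 M \asymp X_2 N$ to rewrite $\sqrt{C/(|X_1| M)}/N^2$ as $c^{1/2} x_1^{1/6} M^{1/6}/(x_2^{2/3} N^{5/3})$ reproduces the asserted form \eqref{eqn:Upsilon_approxi}.

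The main technical obstacle is verifying the mixed-derivative hypotheses of Lemma~\ref{lemma: spl_2} uniformly as $x_1, x_2, c$ range over their dyadic windows, and checking that the output remains $1$-inert in $u$. Both reduce to the explicit cube-root dependence $t_0 \propto u^{1/3}$: each $u$-derivative of $\phi(t_0)$ and of the stationary-phase weight costs a factor $1/u$, as required. With these estimates in hand, the two regimes above exhaust all cases and yield the lemma.
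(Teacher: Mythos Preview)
Your proposal is correct and follows essentially the same route as the paper: compute the phase $\phi(t)=-x_1u/(ct^2)-x_2t/c$, use Lemma~\ref{lemma: spl_1} when the sizes $|X_1|M$ and $|X_2|N$ are unbalanced or $x_1x_2\le 0$, and apply Lemma~\ref{lemma: spl_2} at the stationary point $t_0=(2x_1u/x_2)^{1/3}$ otherwise. The only slip is a harmless arithmetic one in your prefactor bookkeeping (you write $\sqrt{C/(|X_1|M)}/N^2$ where it should be $\sqrt{C/(|X_1|M)}/N$, since $(Z/\sqrt{Y})\cdot t_0^{-2}=N\cdot\sqrt{C/(X_1M)}\cdot N^{-2}$); the final asymptotic you quote is correct.
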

\begin{proof}
    The phase function of the oscillatory integral is
    \[
        h_1(t)=-\frac{x_1u}{ct^2}-\frac{x_2t}{c},
    \]
    we have
    \[
        h_{1}^{'}(t)=\frac{2x_1u}{ct^3}-\frac{x_2}{c}
    \]
    and
    \[
        h_{1}^{''}(t)=-\frac{6x_1u}{ct^4}, \quad h_{1}^{j}(t)\asymp_j \frac{X_1M}{CN^{j}}, j\geq 2.
    \]
    If $X_1M \ggg X_2N$ or $X_1M \lll X_2N$, and $sgn(x_1)\neq sgn(x_2)$, then we have $|h_{1}^{'}(t)|\gg (|X_1|M+|X_2|N)/C\gg T^{\varepsilon}$.
    By Lemma \ref{lemma: spl_1}, we have
    \[
        \Upsilon_{x_1, x_2; c}(u)\ll T^{-A}
    \]
    for any $A>0$.
    Now suppose $X_1M \asymp X_2N$, and $sgn(x_1)\neq sgn(x_2)$.
    The stationary point is
        $t_0=\left(\frac{2x_1u}{x_2}\right)^\frac{1}{3}$
    i.e.  $h_{1}^{'}(t_0)=0$. By Lemma \ref{lemma: spl_2}, $\Upsilon_{x_1, x_2; c}(u)$ is equal to
\[
    \frac{c^{\frac{1}{2}}{x_1}^{\frac{1}{6}}M^{\frac{1}{6}}}{{x_2}^{\frac{2}{3}}N^{\frac{5}{3}}}e\left(-\frac{3\sqrt[3]{2}}{2}\frac{{x_1}^{\frac{1}{3}}{x_2}^{\frac{2}{3}}{u}^{\frac{1}{3}}}{c}\right)w_1(u),
     +O(T^{-A})
\]
for some $1$-inert function $w_1(u)$ which is supported on $u\asymp MN^2$.
\end{proof}
For  $x_1\asymp X_1$, $x_2\asymp X_2$ and $c\asymp C$, we always assume that (\ref{assump:X_1M/C X_2N/C not small}) holds in this subsection. The remaining case will be discussed in the subsection \ref{subsection: remaining-case}.\par

If (\ref{assump:X_1M=X_2N}) holds, we have
\begin{equation}\label{eqn:WunderX_1M=X_2N}
    \begin{split}
    W_{\pm}(x_1, x_2; c )=\frac{c^{\frac{1}{2}}{x_1}^{\frac{1}{6}}M^{\frac{1}{6}}N^{\frac{1}{3}}}{{x_2}^{\frac{2}{3}}}&\int_{u\asymp MN^2}H^\pm\left( \frac{4\pi \sqrt{u}}{c}\right)\\
    &\times e\left(-\frac{3\sqrt[3]{2}}{2}\frac{{x_1}^{\frac{1}{3}}{x_2}^{\frac{2}{3}}{u}^{\frac{1}{3}}}{c}\right)w_1(u) \dd u+O(T^{-A}).
    \end{split}
\end{equation}
In this case, by (\ref{eqn:W=barW}), we can assume that $X_1, X_2\geq 1$.
Otherwise, by Lemma \ref{lemma:Upsilon}, $W_{\pm}(x_1, x_2; c )$ is small.
\par
We define
\begin{equation}\label{eqn:Iphi_def}
    I_{\phi}(x_1, x_2; c):=\frac{c^{\frac{1}{2}}{x_1}^{\frac{1}{6}}M^{\frac{1}{6}}\Delta T}{{x_2}^{\frac{2}{3}}N^{\frac{5}{3}}}\int_{|v|\leq \Delta^{\varepsilon-1}}e\left(\frac{vT}{\pi}\right)g(\Delta v)\Lambda_{\phi}(v; x_1, x_2, c) \dd v,
\end{equation}
where
\begin{equation}\label{eqn:Lambdaphi_def}
    \Lambda_{\phi}(v; x_1,x_2,c)
    =\int_{0}^{\infty}w_1(u)e\left(\frac{2\sqrt{u}\phi(v)}{c}-\frac{3\sqrt[3]{2}}{2}
    \frac{x_1^{\frac{1}{3}}x_2^{\frac{2}{3}}}{c}{u}^{\frac{1}{3}}\right)\dd u,
\end{equation}
and
\begin{equation}\label{eqn:function_phi_def}
    \phi(v)\in \{\pm \cosh(v), \pm \sinh(v), \pm \cos(v), \pm\sin(v)\}.
\end{equation}
By Lemmas \ref{lemma:H+} and \ref{lemma:H-}, we have,
if $\frac{\sqrt{M}N}{C}\gg T^{1-\varepsilon}\Delta$,
\begin{equation*}
    \begin{split}
    W_{+}(x_1, x_2; c )=I_{\cosh}(x_1, x_2; c)+I_{-\cosh}(x_1, x_2; c)+O(T^{-100}),
    \end{split}
\end{equation*}
and
if $\frac{\sqrt{M}N}{C}\asymp T $,
\begin{equation*}
    \begin{split}
    W_{-}(x_1, x_2; c )=I_{\sinh}(x_1, x_2; c)+I_{-\sinh}(x_1, x_2; c)+O(T^{-100}),
    \end{split}
\end{equation*}

From the above discussion, the analytic part $W_{\pm}$ is essentially the integral $I_{\phi}$.
The following lemma helps us to analyse the behavior of $I_{\phi}(x_1, x_2; c)$.
\begin{lemma}\label{lemma:I_phi}
Let $I_{\phi}(x_1, x_2; c)$ be given by (\ref{eqn:Iphi_def}), (\ref{eqn:Lambdaphi_def}) and (\ref{eqn:function_phi_def}) with $x_1\asymp X_1\geq 1$, $x_2\asymp X_2\geq 1$ and $c\asymp C\geq 1$. Set $X=\frac{X_1{X_2}^2}{C}$, We have
\[
    I_{\phi}(x_1, x_2; c)\ll T^{-A},
\]
unless \begin{equation}\label{condition:X_1X_2^2}
    \left\{
    \begin{aligned}
    &{X_1}{X_2}^2\asymp M^{\frac{1}{2}}N, &    &\phi(v)=\pm \cosh(v) \text{ or } \pm \cos(v),\\
    &{X_1}{X_2}^2\ll M^{\frac{1}{2}}N\Delta^{\varepsilon-3},&    &\phi(v)=\pm \sinh(v) \text{ or } \pm \sin(v).
    \end{aligned}
    \right.
\end{equation}
If (\ref{condition:X_1X_2^2}) holds, then we have
\begin{equation}\label{eqn:Iphi=eint2}
    \begin{split}
    I_{\phi}(x_1, x_2; c)=\frac{cM\Delta}{x_2}e\left(\mp\frac{x_1{x_2}^2}{4c}\right)\int_{|u|\ll U}\lambda_{X, T}(u)\left(\frac{x_1{x_2}^2}{4c}\right)^{iu}\dd u+O(T^{-A}).
    \end{split}
\end{equation}
where $\lambda_{X,T}$ and $U$ depend on $X$, $T$, and the choice of $\phi(v)$, $\lambda_{X,T}$ satisfies $\lambda_{X,T}(u)\ll 1$, and
\begin{equation}\label{eqn:U}
    \left\{
    \begin{aligned}
    &U=T^2/X,&   &\phi(v)=\pm \cosh(v) \text{ or } \pm \cos(v),\\
    &U=X^{\frac{1}{3}}T^\frac{2}{3},&   &\phi(v)=\pm \sinh(v) \text{ or } \pm \sin(v).
    \end{aligned}
    \right.
\end{equation}
the $\mp$ sign in (\ref{eqn:Iphi=eint2}) is depend on $\phi$, it is $-$ for $\pm \cosh(v)$ or $\pm \cos(v)$ cases, and
$+$ for $\pm \sinh(v)$ or $\pm \sin(v)$ cases.
\end{lemma}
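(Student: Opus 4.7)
The proof proceeds in two stages: a stationary phase treatment of the inner $u$-integral $\Lambda_\phi$, followed by a stationary phase/Mellin expansion of the resulting $v$-integral in $I_\phi$.

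\textbf{Stage 1 ($u$-integral).} The $u$-phase inside $\Lambda_\phi$ is
\[
\psi_v(u)=\frac{2\sqrt u\,\phi(v)}{c}-\frac{3\sqrt[3]{2}}{2}\frac{x_1^{1/3}x_2^{2/3}}{c}u^{1/3},
\]
with unique critical point $u_0=x_1^2x_2^4/(16\phi(v)^6)$, critical value $\psi_v(u_0)=-x_1x_2^2/(4c\phi(v)^2)$, and second derivative $\psi_v''(u_0)=\phi(v)/(6cu_0^{3/2})$. Since $w_1$ is supported on $u\asymp MN^2$, the condition $u_0\asymp MN^2$ forces $|\phi(v)|\asymp X_1^{1/3}X_2^{2/3}(MN^2)^{-1/6}$. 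For $\phi\in\{\pm\cosh,\pm\cos\}$ the range $|v|\le\Delta^{\varepsilon-1}$ ensures $|\phi(v)|\asymp 1$, which produces $X_1X_2^2\asymp M^{1/2}N$; for $\phi\in\{\pm\sinh,\pm\sin\}$ the same range gives $|\phi(v)|\ll \Delta^{\varepsilon-1}$, which (after absorbing constants into $\varepsilon$) produces the second alternative in \eqref{condition:X_1X_2^2}. Outside these regimes Lemma \ref{lemma: spl_1} yields $\Lambda_\phi\ll T^{-A}$; inside, Lemma \ref{lemma: spl_2} yields
\[
\Lambda_\phi(v)=\frac{c^{1/2}(MN^2)^{5/6}}{x_1^{1/6}x_2^{1/3}}\,e\!\left(-\frac{x_1x_2^2}{4c\phi(v)^2}\right)W_T(v)+O(T^{-A}),
\]
with $W_T$ a $1$-inert amplitude supported on the admissible $v$-range.

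\textbf{Stage 2 ($v$-integral).} Substituting back into $I_\phi$ gives
\[
I_\phi=\frac{cM\Delta T}{x_2}\int e\!\left(\frac{vT}{\pi}-\frac{x_1x_2^2}{4c\phi(v)^2}\right)g(\Delta v)W_T(v)\,dv+O(T^{-A}).
\]
I factor out the expected main phase using $1/\cosh^2 v=1-\tanh^2 v$ for $\phi=\pm\cosh$, $1/\cos^2 v=1+\tan^2 v$ for $\phi=\pm\cos$, and the corresponding Taylor expansion of $\phi(v)^{-2}$ about the $v$-critical point for $\phi\in\{\pm\sinh,\pm\sin\}$; this produces $e(\mp A)$ with $A=x_1x_2^2/(4c)$ and the signs as stated. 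The residual integral has the shape $\int e(A\Psi_\phi(v))\omega(v)\,dv$, and I insert the Mellin--Barnes representation of $e^{2\pi i x}$, shift the contour to the imaginary axis $s=-iu$, and interchange integrations so the $v$-integral is absorbed into a bounded profile $\lambda_{X,T}(u)$. Repeated integration by parts in $v$ against the linear phase $vT/\pi$ confines $u$ to $|u|\ll U$; balancing derivative sizes produces $U=T^2/X$ in the cosh/cos case (from $T\sim Av$ at the $v$-critical point) and $U=X^{1/3}T^{2/3}$ in the sinh/sin case (from $T\sim A/|\phi(v)|^3$ on the admissible annulus).

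\textbf{Main obstacle.} The main difficulty is the sinh/sin case, where $\phi(0)=0$ makes the inner critical point $u_0$ degenerate at $v=0$: stationary phase in $u$ must be restricted to an annular $v$-region, and the $v$-integral must then be analysed around a non-trivial critical point of $vT/\pi-A/\phi(v)^2$. Confirming $\lambda_{X,T}(u)\ll 1$ uniformly, and the precise value of $U$ in \eqref{eqn:U}, requires careful Taylor expansion of $\phi(v)^{-2}$ near its critical point together with full bookkeeping of the amplitude factors produced by both stationary phase applications.
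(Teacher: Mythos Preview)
Your proposal is correct and follows essentially the same two-stage strategy as the paper: stationary phase in $u$ via Lemma~\ref{lemma: spl_2} to reduce $\Lambda_\phi$ to an explicit oscillatory term, then a Mellin-type analysis of the remaining $v$-integral. Your critical point $u_0$, critical value $-x_1x_2^2/(4c\phi(v)^2)$, amplitude $c^{1/2}(MN^2)^{5/6}x_1^{-1/6}x_2^{-1/3}$, and the resulting prefactor $cM\Delta T/x_2$ all agree with the paper's computation, as does the case split leading to \eqref{condition:X_1X_2^2}.

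The only substantive difference is in Stage~2. The paper does not expand $\phi(v)^{-2}$ around the $v$-critical point in the $\sinh/\sin$ case; instead it uniformly writes the residual phase as $\frac{vT}{\pi}+\frac{x_1x_2^2}{4c}(\pm 1-\phi(v)^{-2})$ (so $e(+A)$ is pulled out algebraically for $\sinh/\sin$, not via a local Taylor expansion), and then invokes \cite[Lemma~8.2]{young2014weyl} wholesale for the Mellin analysis and the determination of $U$. Your Mellin--Barnes insertion plus integration-by-parts truncation is a legitimate route to the same conclusion, but be aware that your ``Taylor expansion about the $v$-critical point'' would a priori produce $e(-A/\phi(v_0)^2)$ rather than $e(+A)$; matching the stated phase in \eqref{eqn:Iphi=eint2} requires the algebraic factoring the paper uses, with the remainder absorbed into $\lambda_{X,T}$ via Young's argument.
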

\begin{proof}
Suppose that the integral $\Lambda_{\phi}(v; x_1, x_2, c)$ exists a stationary point, say $u_0=\frac{1}{16}\frac{{x_1}^2{x_2}^4}{{\phi(v)}^6}$, which implies
\begin{equation}\label{eqn:phiv}
    |\phi(v)|\asymp {X_1}^{\frac{1}{3}}{X_2}^{\frac{2}{3}}M^{-\frac{1}{6}}N^{-\frac{1}{3}},
\end{equation}
otherwise it is small.\par
Since $|v|$ is small,
if $\phi(v)=\pm\cosh(v)$, then (\ref{eqn:phiv}) shows ${X_1}{X_2}^2\asymp M^{\frac{1}{2}}N$, which leads to the condition $x_1\asymp \frac{N}{M^\frac{1}{2}},  x_2\asymp M^{\frac{1}{2}}$ by (\ref{assump:X_1M=X_2N}).
If $\phi(v)=\pm\sinh(v)$, then $|v|\asymp {X_1}^{\frac{1}{3}}{X_2}^{\frac{2}{3}}M^{-\frac{1}{6}}N^{-\frac{1}{3}}$, note that $|v|\leq \Delta^{\varepsilon-1}$, which leads to the condition ${X_1}{X_2}^2\ll M^{\frac{1}{2}}N\Delta^{\varepsilon-3}$.
Combining with (\ref{assump:X_1M=X_2N}),
it implies that $X_1\ll M^{-\frac{1}{2}}N\Delta^{\varepsilon-1}, X_2\ll M^{\frac{1}{2}}\Delta^{\varepsilon-1}$.
By Lemma \ref{lemma: spl_2}, we have
\begin{equation}\label{eqn:Lambda=int}
    \Lambda_{\phi}(v; x_1, x_2, c)
    =\frac{c^\frac{1}{2}M^\frac{5}{6}N^\frac{5}{3}}{{x_1}^\frac{1}{6}{x_2}^\frac{1}{3}}e\left(-\frac{x_1{x_2}^2}{4c\phi(v)^2}\right)w_{2, \phi}(v)+O(T^{-A}),
\end{equation}
where $w_{2, \phi}(v)$ is an inert function supported on $|v|\ll \Delta^{\varepsilon-1}$.
Put the above asymptotic into $I_{\phi}(x_1, x_2; c)$, obtain
\begin{equation}\label{eqn:Iphi=eint}
    \begin{split}
    I_{\phi}(x_1, x_2; c)=\frac{cM\Delta T}{x_2}&e\left(\mp\frac{x_1{x_2}^2}{4c}\right)\int_{|v|\leq \Delta^{\varepsilon-1}}g(\Delta v)w_{2, \phi}(v) \\
    &\times e\left(\frac{vT}{\pi}+\frac{x_1{x_2}^2}{4c}(\pm1-\phi(v)^{-2}))\right)\dd v+O(T^{-A}).
    \end{split}
\end{equation}
Then we use the Mellin transform to analyse the $v$-integral, by the same proof as that of \cite[Lemma 8.2]{young2014weyl}, we obtain (\ref{eqn:Iphi=eint2}) and (\ref{eqn:U}) as claimed.
\end{proof}
We summarize the above discussion as the following lemma.
\begin{lemma}\label{lemma:Wproperty}
Let $C\geq 1$ and $X_1, X_2$ with $|X_1|,|X_2|\geq 1$
$x_1\asymp X_1$, $x_2\asymp X_2$ and $c\asymp C$,
$\sigma\in\{\pm\}$.
Set $X=\frac{|X_1|{X_2}^2}{C}>0$. We have
\[
W_{\pm}(x_1, x_2; c )\ll T^{-100},
\]
unless $|X_1|M\asymp |X_2|N$ and $X_1X_2>0$. Notice that
\begin{equation*}
    W_{\pm}(-x_1, -x_2; c )=\overline{W_{\pm}(x_1, x_2; c )},
\end{equation*}
we may assume that $X_1$, $X_2$ are all positive.\\
If $\frac{\sqrt{M}N}{C}\gg T^{1-\varepsilon}\Delta$ and
${X_1}{X_2}^2\asymp M^{\frac{1}{2}}N$,
we have
\begin{equation}
W_{+}(x_1, x_2; c )=\frac{cM\Delta}{x_2}e\left(-\frac{x_1{x_2}^2}{4c}\right)\int_{|u|\ll U_{+}}\lambda_{X, T}(u)\left(\frac{x_1{x_2}^2}{4c}\right)^{iu}\dd u+O(T^{-10}).
\end{equation}
where $\lambda_{X,T}$ depends on $X$, $T$, such that $\lambda_{X,T}(u)\ll 1$, and
$U_{+}=T^2/X$.\\
Or if $\frac{\sqrt{M}N}{C}\asymp T$ and
${X_1}{X_2}^2\ll M^{\frac{1}{2}}N\Delta^{\varepsilon-3}$,
we have
\begin{equation}
W_{-}(x_1, x_2; c )=\frac{cM\Delta}{x_2}e\left(\frac{x_1{x_2}^2}{4c}\right)\int_{|u|\ll U_{-}}\lambda_{X, T}(u)\left(\frac{x_1{x_2}^2}{4c}\right)^{iu}\dd u+O(T^{-10}).
\end{equation}
where $\lambda_{X,T}$ depends on $X$, $T$,  such that $\lambda_{X,T}(u)\ll 1$, and
$U_{-}=X^{\frac{1}{3}}T^\frac{2}{3}$.\\
Otherwise,
\begin{equation}
W_{\pm}(x_1, x_2; c )\ll T^{-10}.
\end{equation}
\end{lemma}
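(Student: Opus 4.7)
The plan is to derive Lemma~\ref{lemma:Wproperty} as a clean consolidation of the two-step stationary phase analyses already carried out in Lemmas~\ref{lemma:Upsilon} and~\ref{lemma:I_phi}, assembled via the explicit forms of $H^{\pm}$ from Lemmas~\ref{lemma:H+} and~\ref{lemma:H-}. First I would start from~\eqref{eqn:W}, perform the change of variables preceding Lemma~\ref{lemma:Upsilon} to write
\[
   W_{\pm}(x_1,x_2;c)=\int_{0}^{\infty}H^{\pm}\!\left(\frac{4\pi\sqrt{u}}{c}\right)\Upsilon_{x_1,x_2;c}(u)\,\dd u,
\]
and invoke Lemma~\ref{lemma:Upsilon}. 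This immediately yields $W_{\pm}\ll T^{-A}$ unless $X_1M\asymp X_2N$ and $x_1x_2>0$, and in the surviving range replaces $\Upsilon_{x_1,x_2;c}(u)$ by the explicit stationary phase approximation~\eqref{eqn:Upsilon_approxi} with the $1$-inert weight $w_1(u)$ supported on $u\asymp MN^2$. After absorbing the resulting polynomial prefactor against the trivial size of $H^{\pm}$, this gives the negligibility $W_{\pm}\ll T^{-100}$ outside the asserted range.

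The symmetry $W_{\pm}(-x_1,-x_2;c)=\overline{W_{\pm}(x_1,x_2;c)}$ is~\eqref{eqn:W=barW}, which combined with the above step lets me assume $X_1,X_2>0$ without loss. Next I would substitute the oscillatory representations~\eqref{eqn:H^+=int} and~\eqref{eqn: H^-=} of $H^{\pm}$ into the remaining integral and split $\cos(x\cosh v)=\tfrac12\bigl(e(x\cosh v/(2\pi))+e(-x\cosh v/(2\pi))\bigr)$, and similarly for $\cos(x\sinh v)$. This realizes $W_{+}$ as a sum $I_{\cosh}+I_{-\cosh}$ and $W_{-}$ as $I_{\sinh}+I_{-\sinh}$, where $I_{\phi}$ is exactly the integral defined in~\eqref{eqn:Iphi_def}. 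The window restrictions $\sqrt{M}N/C\gg T^{1-\varepsilon}\Delta$ (for the $+$ case) and $\sqrt{M}N/C\asymp T$ (for the $-$ case) are forced by the tail bounds on $H^{\pm}$ from Lemmas~\ref{lemma:H+} and~\ref{lemma:H-} applied at argument $4\pi\sqrt{u}/c\asymp \sqrt{M}N/C$; outside these ranges one immediately reads off $W_{\pm}\ll T^{-10}$.

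Finally I would apply Lemma~\ref{lemma:I_phi} to each surviving $I_{\phi}$. For $\phi=\pm\cosh$ the lemma enforces $X_1X_2^2\asymp M^{1/2}N$ and yields the representation~\eqref{eqn:Iphi=eint2} with $U=T^2/X$ and phase shift $-x_1x_2^2/(4c)$, producing the claimed formula for $W_{+}$ with $U_{+}=T^2/X$. For $\phi=\pm\sinh$ the lemma enforces $X_1X_2^2\ll M^{1/2}N\Delta^{\varepsilon-3}$ and yields $U=X^{1/3}T^{2/3}$ with phase shift $+x_1x_2^2/(4c)$, giving the claimed formula for $W_{-}$ with $U_{-}=X^{1/3}T^{2/3}$. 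In any other configuration, Lemma~\ref{lemma:I_phi} gives $I_{\phi}\ll T^{-A}$, hence $W_{\pm}\ll T^{-10}$ after absorbing the polynomial prefactors $\Delta T\cdot c^{1/2}x_1^{1/6}M^{1/6}/(x_2^{2/3}N^{5/3})$.

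The main technical content is entirely contained in the earlier lemmas; the chief bookkeeping difficulty is ensuring that the stationary points of both the inner $t$-integral defining $\Upsilon$ and the outer $v$-integral defining $I_{\phi}$ lie inside the supports of the relevant inert weights, together with tracking that the oscillatory factor $e(\mp x_1 x_2^2/(4c))$ correctly inherits its sign from the sign of $\phi(v)^{-2}-(\pm 1)$ in~\eqref{eqn:Iphi=eint}. Both points have been handled in Lemmas~\ref{lemma:Upsilon} and~\ref{lemma:I_phi}, so the present lemma reduces to packaging those outputs alongside the trivial conjugation symmetry~\eqref{eqn:W=barW}.
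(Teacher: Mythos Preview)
Your proposal is correct and matches the paper's approach exactly: the paper explicitly presents Lemma~\ref{lemma:Wproperty} as a summary of the preceding discussion, which consists of the change of variables giving $W_{\pm}=\int H^{\pm}\Upsilon\,\dd u$, the application of Lemma~\ref{lemma:Upsilon}, the decomposition $W_{+}=I_{\cosh}+I_{-\cosh}$ and $W_{-}=I_{\sinh}+I_{-\sinh}$ via Lemmas~\ref{lemma:H+}--\ref{lemma:H-}, and finally Lemma~\ref{lemma:I_phi}. The bookkeeping points you flag (support of stationary points, sign of the phase, polynomial prefactors absorbed into $T^{-A}$) are precisely the content of those earlier lemmas, so nothing further is needed.
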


\subsection{The arithmetic part}
According to definition of $G_\pm$ (\ref{eqn:G=}), by changing variable $a$ to $-a$, we get
\begin{equation}
    G_{+}(x_1, x_2; c)=G_{-}(-x_1, x_2; c),
\end{equation}
Changing the sign of $x_1,x_2$, we have
\begin{equation}\label{eqn:G=barG}
    G_{\pm}(-x_1, -x_2; c )=\overline{G_{\pm}(x_1, x_2; c )}.
\end{equation}
We define $G'_{\pm}(x_1,x_2;c)=G_{\pm}(x_1, x_2; c)e\left(\mp\frac{x_1{x_2}^2}{4c}\right)$. The above identity implies
\begin{equation}
    G'_{+}(x_1, x_2; c)=G'_{-}(-x_1, x_2; c)
\end{equation}
It suffices to consider the $+$ case.
Opening the Kloosterman sum, we get
\[
    G'_{+}(x_1,x_2; c)=\frac{1}{c^3}\sum_{a (c)}\sum_{ b(c)}\sideset{}{^*}\sum_{k (c)}e\left(\frac{a(k+x_1)+b^2\bar{k}+bx_2}{c}-\frac{x_1{x_2}^2}{4c}\right).
\]
The $a$-sum is not vanishing if $k\equiv -x_1 \bmod c$, which contributes $c$, This gives $(x_1,c)=1$,  and we obtain
\[
    G'_{+}(x_1,x_2; c)=\frac{\chi_{c}(x_1)}{c^2}\sum_{b \bmod c}e\left(\frac{-\bar{x}_1(b-\frac{x_1x_2}{2})^2}{c}\right),
\]
where $\chi_c$ is the principal Dirichlet character modulo $c$.
Next, we separate the $x_2$-term from the above sum. With some discussion on the parity, one derives
\begin{multline}\label{eqn: G=G_1,2}
    G'_{+}(x_1,x_2; c)=G_1(x_1; c)(1-\chi_2(x_1))\chi_2(x_2)\\
    +G_1(x_1; c)(1-\chi_2(x_2))+G_2(x_1; c)\chi_2(x_1)\chi_2(x_2),
\end{multline}
where $\chi_2$ is the principal Dirichlet character modulo $2$,
\[
    G_1(x; c)=\frac{\chi_c(x)}{c^2}\sum_{b \bmod c}e\left(\frac{-\bar{x}b^2}{c}\right),
\]
and
\[
    G_2(x; c)=\frac{\chi_c(x)}{c^2}\sum_{b \bmod c}e\left(\frac{-\bar{x}(b+\frac{1}{2})^2}{c}\right).
\]
This shows that $G_{\pm}$ is the linear combination of $G_{1}$ and $G_{2}$ with bounded coefficients independent of $c$.
By the orthogonality of additive characters,
we know that
\begin{equation}\label{eqn: Gsquare}
\sum_{c\asymp C}|G_i(x_1; c)|^2\ll \frac{1}{C^2},
\end{equation}
for any integer $x_1$ and index $i=1,2$. Moreover, by Weyl's difference method, one  can derive
\begin{equation}\label{eqn: Gupperbound}
 G'_{\pm}(x_1, x_2; c)\ll c^{-\frac{3}{2}+\varepsilon}.
\end{equation}

\subsection{Applying the large sieve inequality}
Applying Lemma \ref{lemma:Wproperty}, we have,
if $\frac{\sqrt{M}N}{C}\gg T^{1-\varepsilon}\Delta$,
\begin{equation}\label{eqn:S+asymp}
\begin{split}
    \mathcal{S}_{+}^{'}&(M, N; C)\ll  \mathop{\sum\sum}_{X_1, X_2 \, \text{dyadic}\geq 1\atop X_1M\asymp X_2N, X_1{X_2}^2\asymp M^{\frac{1}{2}}N}\frac{CM\Delta}{X_2}\\
    &\quad \times \int_{|u|\ll \frac{T^2C}{X_1{X_2}^2}}\left|\sum_{c\asymp C}\sum_{x_1\asymp X_1}\sum_{x_2\asymp X_2}G'_{+}(x_1,x_2; c)\lambda_{X,T}(u)\left(\frac{x_1{x_2}^2}{4c}\right)^{iu}\right|\dd u+O(T^{-3}).
\end{split}
\end{equation}
and
if $\frac{\sqrt{M}N}{C}\asymp T$,
\begin{equation}\label{eqn:S-asymp}
\begin{split}
    \mathcal{S}_{-}^{'}&(M, N; C)\ll  \mathop{\sum\sum}_{X_1, X_2 \, \text{dyadic}\geq 1\atop X_1M\asymp X_2N, X_1{X_2}^2\ll M^{\frac{1}{2}}N\Delta^{\varepsilon-3}}\frac{CM\Delta}{X_2}\\
    &\quad\times \int_{|u|\ll \frac{{X_1}^\frac{1}{3}{X_2}^\frac{2}{3}{T}^\frac{2}{3}}{{C}^\frac{1}{3}}}
    \left|\sum_{c\asymp C}\sum_{x_1\asymp X_1}\sum_{x_2\asymp X_2}G'_{-}(x_1,x_2; c)\lambda_{X,T}(u)\left(\frac{x_1{x_2}^2}{4c}\right)^{iu}\right|\dd u+O(T^{-3}).
\end{split}
\end{equation}
Otherwise $\mathcal{S}_{\pm}^{'}(M, N; C)\ll T^{-2}$.
By the conjugacy (\ref{eqn:W=barW}) and (\ref{eqn:G=barG}), we can assume the dyadic parameters $X_1, X_2\geq 1 $.
Using (\ref{eqn: G=G_1,2}), we can bound the integral by
\begin{equation}\label{eqn: S_int_bound}
    \sup_{j=1,2}\int_{|u|\ll U_{\pm}}\left|\sum_{c\asymp C}\sum_{x_1\asymp X_1}\sum_{x_2\asymp X_2}G_{j}(x_1; c)a(x_1)b(x_2)\lambda_{X,T}(u)\left(\frac{x_1{x_2}^2}{4c}\right)^{iu}\right|\dd u,
\end{equation}
where $U_{+}=\frac{T^2C}{X_1{X_2}^2}$, $U_{-}=\frac{{X_1}^\frac{1}{3}{X_2}^\frac{2}{3}{T}^\frac{2}{3}}{{C}^\frac{1}{3}}$ and $a(x_1), b(x_2)$ are some bounded complex number.
Applying the Cauchy--Schwarz inequality, \eqref{eqn: S_int_bound} is bounded by
\begin{equation*}
    \begin{split}
    \sup_{j=1,2}\left(\int_{|u|\ll U_{\pm}}\left|\sum_{c\asymp C}\sum_{x_1\asymp X_1}G_{j}(x_1; c)a(x_1)\left(\frac{x_1}{c}\right)^{iu}\right|^2\dd u\right)^{\frac{1}{2}}\left(\int_{|u|\ll U_{\pm}}\left|\sum_{x_2\asymp X_2}b(x_2){x_2}^{2iu}\right|^2\dd u\right)^{\frac{1}{2}}
    \end{split}
\end{equation*}
By Lemma \ref{lemma:Largesieve} and Lemma \ref{lemma:Largesieve2} and (\ref{eqn: Gsquare}),
it is bounded by
\begin{equation*}
C^\varepsilon {X_1}^\varepsilon\left(\frac{(U_{\pm}+X_1C)(U_{\pm}+X_2)X_1X_2}{C^2}\right)^\frac{1}{2}.
\end{equation*}
Put the above bounds in (\ref{eqn:S+asymp}) and (\ref{eqn:S-asymp}). Note that $M, N\ll T^{1+\varepsilon}$ and $C\leq T^{B}$ for a fixed constant $B$.
In (\ref{eqn:S+asymp}), we have $X_1\asymp \frac{N}{\sqrt{M}}$ and $X_2\asymp {\sqrt{M}}$.
$\frac{\sqrt{M}N}{C}\gg T^{1-\varepsilon}\Delta$ implies
$U_{+}=\frac{T^2C}{X_1{X_2}^2}\asymp \frac{T^2C}{N{M}^\frac{1}{2}}\ll\frac{T^{1+\varepsilon}}{\Delta}$
and
$X_1C\asymp \frac{NC}{\sqrt{M}}\ll T^{\varepsilon}U_{+}\ll \frac{T^{1+\varepsilon}}{\Delta}$.
obviously, $X_2\asymp {\sqrt{M}}\ll \frac{T^{2+\varepsilon}}{N{M}^\frac{1}{2}}\ll T^{\varepsilon}U_{+}$
Therefore
\begin{equation}\label{eqn:S'+bound}
  \begin{split}
  \mathcal{S}_{+}^{'}(M, N; C)&\ll\Delta M\left(\frac{M}{N}\right)^{\frac{1}{2}}(U_{+}+X_1C)^{\frac{1}{2}}(U_{+}+X_2)^{\frac{1}{2}}T^\varepsilon\\
  &\ll T^{\varepsilon}\Delta \sqrt{MN}U_{+}\ll T^{1+\varepsilon} \sqrt{MN}.
  \end{split}
\end{equation}
In (\ref{eqn:S-asymp}), we have $X_1\ll T^{\varepsilon}\frac{N}{M^{\frac{1}{2}}\Delta^3}$ and $X_2\asymp \frac{X_1M}{N}$.
$\frac{\sqrt{M}N}{C}\asymp T$ implies
$M, N\gg T^{\varepsilon}C^2$
$U_{-}=\frac{{X_1}^\frac{1}{3}{X_2}^\frac{2}{3}{T}^\frac{2}{3}}{{C}^\frac{1}{3}}\ll\frac{T^{1+\varepsilon}}{\Delta}$ and
$X_1C\ll T^{\varepsilon}\frac{NC}{M^{\frac{1}{2}}\Delta^3}\asymp \frac{N^2}{T\Delta}\ll T^{\varepsilon}U_{-}$,
$X_2\ll T^{\varepsilon}U_{-}$.
Therefore
\begin{equation}\label{eqn:S'-bound}
  \begin{split}
  \mathcal{S}_{-}^{'}(M, N; C)&\ll T^\varepsilon \Delta \sup_{X_1\ll T^{\varepsilon}\frac{N}{M^{\frac{1}{2}}\Delta^3} \atop X_2\asymp \frac{X_1M}{N} }M\left(\frac{X_1}{X_2}\right)^{\frac{1}{2}}(U_{-}+X_1C)^{\frac{1}{2}}(U_{-}+X_2)^{\frac{1}{2}}T^\varepsilon\\
  &\ll T^{\varepsilon}\Delta\sqrt{MN}U_{-}\ll T^{1+\varepsilon} \sqrt{MN}.
  \end{split}
\end{equation}

\subsection{The remaining case}
 We now give the estimate for $ \mathcal{S}_{\pm}^{''}(M, N, C)$ which contributes the error term in the main theorem \ref{thm:moment}. Recall that
 $ \mathcal{S}_{\pm}^{''}(M, N, C)$ is the sum with restrictions
\label{subsection: remaining-case}
\begin{equation}\label{assump:XM/C XN/C small}
|x_1|M/C<T^{\varepsilon}\quad \text{and} \quad |x_2|N/C< T^{\varepsilon}.
\end{equation}
According to Lemma \ref{lemma:H+} and (\ref{eqn:W}), $W_{+}(x_1, x_2; c)$  is small unless $\sqrt{M}N/C\gg \Delta T^{1-\varepsilon}$.
If $x_1\neq 0$ or $x_2\neq 0$, then (\ref{assump:XM/C XN/C small}) implies $\sqrt{M}N/C\ll T^{\varepsilon}\max\{N/\sqrt{M}, \sqrt{M}\}\ll T^{1+\varepsilon}$. Taking $\Delta\gg T^{\varepsilon}$, we have $W_{+}(x_1, x_2; c)$ is small. If $x_1=x_2= 0$, we have $G_{+}(0, 0; c)=0$.
Thus
 \begin{equation}\label{eqn:S''+bound}
    \begin{split}
      \mathcal{S}_{+}^{''}(M, N; C)\ll  T^{-10}.
    \end{split}
  \end{equation}
\par
Using Lemma \ref{lemma:H-}, we have
 $W_{-}(x_1, x_2, c)$  is small unless $\sqrt{M}N/C\asymp T$.
By trivial estimate for \eqref{eqn:W},
\begin{equation}\label{W- bound2}
    W_{-}(x_1, x_2; c)\ll  MNT^{1+\varepsilon}.
    \end{equation}
By (\ref{eqn: Gupperbound}), we have
\begin{equation}\label{eqn:S''-bound}
    \mathcal{S}_{-}^{''}(M, N; C)\ll T^{\varepsilon}C\frac{C}{N}\frac{C}{M}TMNC^{-3/2+\varepsilon}\ll T^{1+\varepsilon}\sqrt{MN}.
 \end{equation}
Combining (\ref{eqn:S=S_1+S_2}), (\ref{eqn:S'+bound}), (\ref{eqn:S'-bound}), (\ref{eqn:S''+bound}) and (\ref{eqn:S''-bound}), we finish the proof of Proposition \ref{proposition:S}.

\section{Proof of Theorem \ref{thm:moment_holo}}\label{sec:holo}
The proof of Theorem \ref{thm:moment_holo} is roughly repeating the progress of proving Theorem \ref{thm:moment}.
We denote the mixed moment in Theorem \ref{thm:moment_holo} by $\mathcal{M}_{K, \Delta}$.
Let $f\in H_{4k}$,
we recall the approximate functional equations for central value of $L$-functions (see \cite[Theorem 5.3]{IwaniecKowalski2004analytic})
\begin{equation}\label{AFE3}
L(1/2, f)=2\sum_{n\geq 1}\frac{\lambda_f(n)}{n^{1/2}}\frac{1}{2\pi i}\int_{(2)}\frac{\Gamma(y+2k)}{\Gamma(2k)}\frac{e^{y^2}\dd y}{(2\pi n)^y y},
\end{equation}
and (see \cite[Lemma 2.1 (2.2), (2.6)]{Khan2012Nonvanishing})
\begin{equation}\label{AFE4}
L(1/2, \sym^2 f)=\sum_{n\geq 1}\frac{\lambda_f(n^2)}{n^{1/2}}V_{4k}(n),
\end{equation}
and
\begin{equation}
V_{4k}(n)=\log\frac{4k}{n}+C+O\left(\frac{n}{k}\right),
\end{equation}
where the constant $C=2\gamma-\frac{3\log \pi}{2}+\frac{\psi(3/4)}{2}-2\log 2=\frac{3}{2}\gamma-\frac{3\log\pi}{2}-\frac{5\log 2}{2}+\frac{\pi}{4}.$\par

The Petersson trace formula says
\begin{equation}\label{PeterssonTF}
\sum_{f\in H_{4k}}w_f\lambda_f(n)\lambda_f(m)=\delta(n,m)+2\pi \sum_{c\geq 1}\frac{S(n, m; c)}{c}J_{4k-1}\left(\frac{4\pi\sqrt{nm}}{c}\right).
\end{equation}
Let $h_{K,\Delta}(k)=h\left(\frac{4k-K-1}{\Delta}\right)$.
Applying the above approximate functional equations and Petersson trace formula (\ref{PeterssonTF}), we have
\begin{equation}\label{eqn:M=D+R}
\mathcal{M}_{K, \Delta}=\mathcal{D}+ \mathcal{R},
\end{equation}
where $\mathcal{D}$ and  $\mathcal{R}$ represents the contribution of the diagonal term and the off-diagonal term respectively.
The diagonal term is
\begin{equation}\label{eqn:Diagonal_holo}
\begin{split}
\mathcal{D}
&=2\sum_{k\geq 1}h_{K,\Delta}(k)\sum_{n\geq 1}\sum_{m\geq 1}\frac{\delta(n^2, m)}{n^{1/2}m^{1/2}}V_{4k}(n) \frac{1}{2\pi i}\int_{(2)}\frac{\Gamma(y+2k)}{\Gamma{(2k)}}\frac{e^{y^2}\dd y}{(2\pi m)^y y}\\
&=2\sum_{k\geq 1}h_{K,\Delta}(k) \frac{1}{2\pi i}\int_{(2)}\frac{\Gamma(y+2k)}{\Gamma{(2k)}}\sum_{n\geq 1}\frac{V_{4k}(n)}{n^{3/2+2y}}\frac{e^{y^2}\dd y}{(2\pi)^y y}\\
&=2\sum_{k\geq 1}h_{K,\Delta}(k) \frac{1}{2\pi i}\int_{(2)}\frac{\Gamma(y+2k)}{\Gamma{(2k)}}\sum_{n\geq 1}\frac{1}{n^{3/2+2y}}\left(\log\frac{4k}{n}+C+O\left(\frac{n}{k}\right)\right) \frac{e^{y^2}\dd y}{(2\pi)^y y}.\\
\end{split}
\end{equation}
For the $O$-term, we move the integral line to $\Re(y)=\frac{1}{4}+\varepsilon$. The corresponding integral contributes at most
\begin{equation*}
\int_{-\infty}^{\infty}\frac{\Gamma(\frac{1}{4}+\varepsilon+2k)}{\Gamma{(2k)}}\zeta(1+2\varepsilon)\frac{e^{1-t^2}\dd t}{k(1+|t|)}\ll k^{-3/4+\varepsilon}.
\end{equation*}
And the contribution from $O$-term in $\mathcal{D}$ is $O(\Delta K^{-3/4+\varepsilon})$.  Thus, we have
\begin{multline}
\mathcal{D}
=2\sum_{k\geq 1}h_{K,\Delta}(k)\frac{1}{2\pi i}\int_{(2)}\frac{\Gamma(y+2k)}{\Gamma{(2k)}}\\
\times\left(\zeta(\frac{3}{2}+2y)\log 4k+ C\zeta(\frac{3}{2}+2y)+\zeta'(\frac{3}{2}+2y)\right)
\frac{e^{y^2}\dd y}{(2\pi)^y y}+O(\Delta K^{-3/4+\varepsilon}).\\
\end{multline}
Shifting the integral line to $\Re(y)=-1/4+\varepsilon$, cross the simple pole at $y=0$, and the residue is
$
\zeta(\frac{3}{2})\log 4k+C\zeta(\frac{3}{2})+\zeta'(\frac{3}{2}).
$
and the shifted integral is bounded by $O(k^{-1/4+\varepsilon})$.
Thus
\begin{multline}\label{eqn:Diagonal_holo asymp}
        \mathcal{D}=2\sum_{k\geq 1}h_{K,\Delta}(k)\left( \zeta(\frac{3}{2})\log 4k+C\zeta(\frac{3}{2})+\zeta'(\frac{3}{2})\right)+O(\Delta K^{-\frac{1}{4}+\varepsilon})\\
        =2\zeta(\frac{3}{2})\sum_{k\geq 1}h_{K,\Delta}(k)\log k +2\zeta(\frac{3}{2})\left(\frac{3}{2}\gamma-\frac{3\log(2\pi)}{2}+\log 2+\frac{\pi}{4}+\frac{\zeta'(3/2)}{\zeta(3/2)}\right)\sum_{k\geq 1}h_{K,\Delta}(k)\\+O(\Delta K^{-\frac{1}{4}+\varepsilon}).
\end{multline}

The contribution of the off-diagonal term $\mathcal{R}$ can also be reduced to bounding the sum
\begin{equation}\label{eqn:holo_S}
    \mathcal{S}_{\holo}(M, N; C)=\sum_{m\geq1}
  \sum_{n\geq1} w_M\left(\frac{m}{M}\right) w_N\left(\frac{n}{N}\right) \sum_{c\asymp C}\frac{S(n^2, m;c)}{c} H^{\holo}\left(\frac{4\pi\sqrt{m}n}{c}\right),
\end{equation}
where
\begin{equation}\label{eqn:holo_H}
    H^{\holo}(x)=\sum_{k\geq 1}h\left(\frac{4k-K-1}{\Delta}\right)J_{4k-1}(x).
\end{equation}
By \cite[pp. 85--86]{IwaniecTopics}, we have
\begin{equation}
    H^{\holo}(x)=\sum_{n\equiv 1(\mod 4)}h\left(\frac{n-K}{\Delta}\right)J_{n}(x)
    =\frac{1}{4}\int_{-\infty}^{\infty}\hbar(t)c(t)\dd t,
\end{equation}
where
\[
    \hbar(t)=\int_{-\infty}^{\infty}h\left(\frac{y-K}{\Delta}\right)e(ty)\dd y,
\]
\[
    c(t)=-2i\sin(x\sin(2\pi t))+2\sin(x\cos(2\pi t)).
\]
By direct calculation, $\hbar(t)=\Delta e(tK)\hat{h}(-\Delta t)$. Let $g(x)=\hat{h}(-\Delta x/2\pi)/4$, we have
\begin{equation}
    H^{\holo}(x)=\Delta \int_{-\infty}^{\infty}e\left(\frac{vK}{\pi}\right)g(\Delta v)(-2i\sin(x\sin(2\pi t))+2\sin(x\cos(2\pi t)))\dd t.
\end{equation}
where $g(x)$ satisfying $g^{j}(x)\ll_{j, A}(1+|x|)^{-A}$.
Thus $H^{\holo}(x)\cdot K$  can be essentially decompose into four pieces of the form (\ref{eqn:H_0}).
Similarly, the trivial estimate can bound the sum in the case of $c$ large well and
\begin{equation}\label{eqn:mathcalR_bound}
    \mathcal{R}\ll K^{\varepsilon}\sup_{M,N\ll K^{1+\varepsilon}, C\ll K^B}\frac{|\mathcal{S}_{\holo}(M, N; C)|}{\sqrt{MN}}+O(K^{-10}).
\end{equation}
for some fixed large $B>0$.
Remark \ref{remark:Pro. holds for H_0} implies that Proposition \ref{proposition:S} holds for replacing $H^{\pm}$ by $H^{\holo}(x)\cdot K$. Thus
\begin{equation}\label{eqn:S_holo bound}
    \mathcal{S}_{\holo}(M, N; C)\ll \sqrt{MN}K^{\varepsilon}.
\end{equation}
Theorem \ref{thm:moment_holo} follows upon combining (\ref{eqn:M=D+R}), (\ref{eqn:Diagonal_holo asymp}), (\ref{eqn:mathcalR_bound}) and (\ref{eqn:S_holo bound}).
\section*{Acknowledgements}
The authors would like to thank Jesse Thorner for his very helpful comments and suggestions.
This material is based upon work supported
by the Swedish Research Council under grant no. 2021-06594
while Bingrong Huang was in residence at Institut Mittag-Leffler in Djursholm, Sweden during the spring semester of 2024.

%%%%%%%%%%%%%%%%%%%%%%%%%%%%%%%%%%%%%%%%%%%%%%%%%%%%%%%%%%%%%%
%%%%%                      References                    %%%%%
%%%%%%%%%%%%%%%%%%%%%%%%%%%%%%%%%%%%%%%%%%%%%%%%%%%%%%%%%%%%%%
%\medskip

\end{document}